\colorlet{red}{black}
\colorlet{blue}{black}
\title[Riemannian Submersions and Intermediate Ricci Curvature]{Do Riemannian Submersions Preserve Positive Intermediate Ricci Curvature?}
\author{Hasan M. El-Hasan \and Russell Phelan \and Frederick Wilhelm}
\address{Hasan M. El-Hasan, Department of Mathematics, University of California, Santa Barbara, Santa Barbara, CA 93106}
\email{elhasan@ucsb.edu}
\urladdr{https://www.hmelhasan.com}
\address{Russell Phelan, Department of Mathematics, University of California, Riverside, Riverside, CA 92521}
\email{rphel001@ucr.edu}
\address{Frederick Wilhelm, Department of Mathematics, University of California Riverside, Riverside, CA 92521. }
\email{fred@math.ucr.edu}
\urladdr{https://sites.google.com/site/frederickhwilhelmjr/home}
\thanks{The authors gratefully acknowledge the support of the NSF, via Award DMS 2203686.}
\date{\today}
\begin{document}
\begin{abstract}
Pro and the third author showed that there are Riemannian submersions $\pi: M \to B$ with $M$ a compact manifold with positive Ricci curvature, whose base $B$, has Ricci curvatures with both signs. Thus, Riemannian submersions need not preserve positive Ricci curvature. In this note we establish the degree to which this result extends into the setting of positive intermediate Ricci curvature. 

It is an immediate consequence of the Gray--O'Neill Equation that if $\pi: M\to B$ is a Riemannian submersion whose base is $b$-dimensional and $\mathrm{Ric}_{k}(M) >0$ for any $k \in \{ 1,2,\cdots, b-1\}$, then $ \mathrm{Ric}_{k}(B)$ is also positive. Here we show that this observation is optimal in the following strong sense:

For  $k \geq \mathrm{dim}(B)$, let $\pi: (M,g_M) \to (B,g_B)$ be a Riemannian submersion from a complete Riemannian manifold with $\mathrm{Ric}_{k}(M) >0$. We show how to perturb $g_M$ in the $C^1$-topology to produce a Riemannian submersion $\pi: (M,\tilde{g}_M) \to (B,\tilde{g}_B)$ whose total space has  $\mathrm{Ric}_{k} >0$, but whose base has Ricci curvature of both signs. 

In particular, this shows that Riemannian submersions that do not preserve positive Ricci curvature are dense in the $C^1$-topology among the complete metrics on $M$  with $\mathrm{Ric}>0$ for which a given submersion $\pi: M\to B$ is Riemannian. 
\end{abstract}

\maketitle

\section{Introduction}\label{1_introduction}
It follows from the Gray--O'Neill Equation that Riemannian submersions preserve lower bounds on sectional curvature (\cites{Gray, oneill}). In particular, they preserve positive sectional curvature. 
On the other hand,  Pro and the third author showed that they need not preserve positive Ricci curvature. More specifically they showed

\begin{custom}{Theorem}[Pro--Wilhelm, \cite{pro_wilhelm}]
	For any $C > 0$, there is a Riemannian submersion $\pi: M \to B$ for which $M$ is compact with positive Ricci curvature and $B$ has some Ricci curvatures less than $-C$.
\end{custom}

As intermediate Ricci curvatures interpolate between sectional and Ricci curvatures, it is natural to ask about the extent to which the Gray--O'Neill versus the Pro--Wilhelm results extend to the setting of intermediate Ricci curvature. Before we answer this question we recall the definition of positive intermediate Ricci curvature.

\begin{defn}
    We say that $ (M,g) $ has positive $ k^{\text{th}} $-intermediate Ricci curvature  if for all orthonormal sets $ (u, v_1,\cdots,v_k) $  we have
    \[  \sum_{i=1}^k \sec^M(u, v_i) > 0. \]
    For brevity, we will write 
    $ \ric_k^M>0 $.
    \end{defn}
    
    If $\mathcal{M}_k$ denotes the set of isometry classes of Riemannian manifolds with positive  $ \ric_k$, then
    \begin{center}
    \begin{minipage}{.6\textwidth}
        $\cal M_1$ is the class with  positive sectional curvature, \vspace{0.1in}\\
        $\cal M_{n-1}$ is the class with  positive Ricci curvature, and \vspace{0.1in}\\
        $\cal M_1 \subset\cdots\subset \cal M_k \subset \cal M_{k+1} \subset\cdots\subset \cal M_{n-1} $.
    \end{minipage}
    \end{center}

    \noindent Thus  
    the sequence of hypotheses $\{ \ric_{k}^M>0 \}_{k=1}^{n-1}$ is decreasing in strength and interpolates between the two classical hypotheses  of positive sectional and positive Ricci curvatures. 

    Let $\pi:M^n\to B^b$ be a Riemannian submersion from a compact  $n$-manifold to a compact  $b$-manifold. We will show that the Gray--O'Neill result extends to $ \ric_{k}>0 $  provided  $k \leq b-1 $, and the Pro--Wilhelm result extends  to $ \ric_{k}>0 $ provided $k \geq b$. More precisely

\begin{thmalph}\label{main-theorem}
    Let $\pi:M^n\to B^b$ be a Riemannian submersion from a compact  $n$--manifold to a compact  $b$--manifold with $b\geq 2$. Suppose that $\ric_k^M>0$ for some  $k \in \{ 1, 2, \ldots, n-1 \} $.
    \begin{enumerate}
    \item If $k\leq b-1$, then $\ric_k^B>0$.
    \item  If $k \geq b$, then $B$ need not have positive Ricci curvature.
    \end{enumerate}
\end{thmalph}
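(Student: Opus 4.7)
\textbf{Part (1)} follows directly from the Gray--O'Neill horizontal curvature equation. Given an orthonormal set $(u, v_1, \ldots, v_k) \subset T_qB$ with $k + 1 \leq b$, its horizontal lift $(\tilde u, \tilde v_1, \ldots, \tilde v_k) \subset T_pM$ is orthonormal, and the horizontal equation yields $\sec^B(u, v_i) = \sec^M(\tilde u, \tilde v_i) + \frac{3}{4}\|[\tilde u, \tilde v_i]^{\mathcal V}\|^2 \geq \sec^M(\tilde u, \tilde v_i)$. Summing over $i$ gives $\ric_k^B(u; v_1, \ldots, v_k) \geq \ric_k^M(\tilde u; \tilde v_1, \ldots, \tilde v_k) > 0$.

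\textbf{Part (2)} is the substantive statement, and I would prove it by establishing the stronger $C^1$-perturbation result announced in the abstract. The plan: start with any compact Riemannian submersion $\pi: (M, g_M) \to (B, g_B)$ with $\ric_k^M \geq 2\kappa > 0$ (for example, a Hopf bundle, or a product $B \times F$ with $F$ of sufficiently positive curvature). Fix $p_0 \in B$ and introduce a family of perturbations of $g_B$, supported in the geodesic ball of radius $\epsilon$ around $p_0$, of the form $h_\epsilon = \epsilon^{1+\alpha}\, \phi(\dist(\cdot, p_0)/\epsilon)\,\xi$, where $\alpha \in (0,1)$, $\phi$ is a standard bump function, and $\xi$ is a fixed symmetric $(0,2)$-tensor chosen so that $\ric^{g_B + h_\epsilon}(e, e)|_{p_0} \to -\infty$ along a prescribed direction $e$. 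The scaling forces $\|h_\epsilon\|_{C^1} = O(\epsilon^\alpha) \to 0$ but $\|h_\epsilon\|_{C^2} = O(\epsilon^{\alpha-1}) \to \infty$. Lift to $M$ by defining $\tilde g_M$ to coincide with $g_M$ on the vertical subbundle and on horizontal--vertical pairs, and to equal $g_M + \pi^* h_\epsilon$ on $\mathcal H \otimes \mathcal H$. Then $\pi: (M, \tilde g_M) \to (B, g_B + h_\epsilon)$ is a Riemannian submersion with $\|\tilde g_M - g_M\|_{C^1} \to 0$.

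The remaining task, which is also the main obstacle, is to show $\ric_k^{\tilde g_M} > 0$ for $\epsilon$ small. This is automatic outside $\pi^{-1}(\supp h_\epsilon)$. Inside, the hypothesis $k \geq b$ is used decisively: in any orthonormal $(u, v_1, \ldots, v_k) \subset T_p \tilde M$, since $k + 1 > \dim \mathcal H$, the horizontal projections of the frame span a subspace of dimension at most $b$. Expanding the difference $\sec^{\tilde g_M}(u, v_i) - \sec^{g_M}(u, v_i)$ in terms of $\nabla^2 h_\epsilon$ and the submersion's $A$- and $T$-tensors, and applying the trace identity $\sum_a e_a^{\mathcal H} \otimes e_a^{\mathcal H} = \mathrm{Id}_{\mathcal H}$ for a completed ON basis $\{e_a\}$ of $T_p\tilde M$, the dominant $\|h_\epsilon\|_{C^2}$-sized portion of $\sum_i \bigl(\sec^{\tilde g_M}(u, v_i) - \sec^{g_M}(u, v_i)\bigr)$ collapses to a partial trace of $\delta \ric^{\tilde g_B}$. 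By designing $\xi$ so that the Ricci blow-up is rank-one and concentrated in direction $e$, this partial trace stays uniformly bounded below on all $(k+1)$-frames in $TM$, and the resulting net change is absorbed by the $2\kappa$ reserve of $\ric_k^{g_M}$. Individually $|\sec^{\tilde g_M}(u, v_i) - \sec^{g_M}(u, v_i)|$ can grow like $\epsilon^{\alpha-1}$, so the argument relies essentially on the combination of the dimensional constraint $\dim \mathcal H < k + 1$ and the rank-one structure of $\delta \ric^{\tilde g_B}$ to prevent this blow-up from aggregating into a negative $\ric_k^{\tilde g_M}$ at any frame.
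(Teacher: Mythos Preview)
Your argument for Part (1) is correct and coincides with the paper's.

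For Part (2) there is a genuine gap. You perturb only the horizontal block of $g_M$, leaving the vertical and mixed blocks unchanged. Under such a change the $A$- and $S$-tensors of the submersion are unaltered and the connection differs from $\nabla$ by a tensor of size $O(\|h_\epsilon\|_{C^1})$, so for horizontal $X$ and vertical $V$ one has $\sec^{\tilde g_M}(X,V)-\sec^{g_M}(X,V)=O(\|h_\epsilon\|_{C^1})\to 0$; the $C^2$-large part of $h_\epsilon$ feeds only into $\mathcal H\wedge\mathcal H$. Now take the frame with $u=\tilde e$ (the horizontal lift of your distinguished direction $e$), $v_1,\dots,v_{b-1}$ a horizontal completion, and $v_b,\dots,v_k$ vertical. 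The Gray--O'Neill horizontal equation gives $\sum_{i<b}\bigl(\sec^{\tilde g_M}(u,v_i)-\sec^{g_M}(u,v_i)\bigr)=\delta\ric^B(e,e)+O(\|h_\epsilon\|_{C^1})$, while the remaining terms contribute only $O(\|h_\epsilon\|_{C^1})$. Your ``partial trace'' is therefore exactly $\delta\ric^B(e,e)$, which you have deliberately sent to $-\infty$; the rank-one hypothesis offers no protection, because the frame witnessing failure is precisely the one with $u$ along the rank-one direction. Hence $\ric_k^{\tilde g_M}>0$ cannot survive.

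The missing idea, supplied in the paper, is a second, independent warping of the fibers: one takes $\tilde g_M = e^{2\tilde\omega_h}g_M|_{\mathcal H\oplus\mathcal H}+e^{2\tilde\omega_v}g_M|_{\mathcal V\oplus\mathcal V}$ with $\tilde\omega_v$ also $C^1$-small but having large \emph{negative} horizontal Hessian on a ball containing $\supp\tilde\omega_h$. The Gromoll--Walschap vertizontal curvature formula then yields $\sec^{\tilde g_M}(X,V)-\sec^{g_M}(X,V)\approx -\hess_{\tilde\omega_v}(X,X)\geq -C_v>0$, so every horizontal--vertical plane \emph{gains} curvature. Since $k\geq b$ forces any orthonormal $(k+1)$-frame to contain a vector with nontrivial vertical part, a Reiser--Wraith eigenvalue argument shows that choosing $-C_v$ large enough relative to $C_h$ (so that $(b-1)\lambda_{h,h}+\lambda_{h,v}>0$) makes this vertical gain dominate the horizontal loss in every $\ric_k$-sum, and $\ric_k^{\tilde g_M}>0$ is preserved.
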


\red{We explain in Subsection \ref{Proof of Part A} how Part 1 of Theorem \ref{main-theorem} is an easy consequence of the Gray--O'Neill Equation.} 
\red{
The bulk of the rest of the paper is devoted to the proof of} the following generalization of Part 2 of {Theorem~\ref{main-theorem}}.   

\begin{thmalph}\label{trying too hard}
Let $\pi :(M,g_M) \to (B,g_{B}) $ be a Riemannian submersion from a compact $n$-manifold $M$. Suppose that $\dim (B) =b \geq2$ and for some  $k\geq b$, $\ric_k^M>0$.

Then for every $\epsilon >0$, there are Riemannian metrics $\tilde{g}_M$ and $\tilde{g}_B$ on $M$ and $B$ with the following properties. 
\begin{enumerate}
\item With respect to the new metrics  $\tilde{g}_M$ and $\tilde{g}_B$,
\begin{equation*}
\pi : (M,\tilde{g}_M) \to (B,\tilde{g}_{B})
\end{equation*}
is a Riemannian submersion.

\item  With respect to $\tilde{g}_{M}$, $\ric_{k}^M >0$.

\item $( B,\tilde{g}_{B}) $ has Ricci curvatures of both signs.

\item The $C^1$\red{-distances} of $\tilde{g}_M$ and $\tilde{g}_B$ to the original metrics satisfy,
\begin{equation*}
\norm{g_M-\tilde{g}_M}_{C^{1}}<\epsilon
\qquad\text{and}\qquad
\norm{g_B-\tilde{g}_B}_{C^{1}}<\epsilon.
\end{equation*}
\end{enumerate}
In fact, given any  $K>0$ and $p\in B$ we can choose $\tilde{g}_{M}$ to have all of the properties above and,  additionally, satisfy
\begin{eqnarray}\label{real neg}
\sec (B,\tilde{g}_{B})|_p &< & -K ,  \; \; \text{and} \\
  \ric_k(M,\tilde{g}_{M}) &\geq & \min{ \ric_k (M,g_M) } - \epsilon. \label{almost the same}
\end{eqnarray}

\end{thmalph}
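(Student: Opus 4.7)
The plan is to produce $\tilde g_M$ by a bump perturbation of $g_M$ that is $C^1$-small but has controlled $C^2$-size, supported in a small ball around a fixed horizontal lift $\tilde p\in\pi^{-1}(p)$. After choosing normal coordinates on $B$ at $p$ with orthonormal frame $e_1,\dots,e_b$ so that the preferred $2$-plane is $\sigma=\mathrm{span}(e_1,e_2)$, and lifting to basic horizontal fields $\tilde e_i$ on $M$ completed by a vertical frame $V_1,\dots,V_{n-b}$, I will construct a perturbation $h$ with block form
\[
h \;=\; \phi\cdot\sigma_H \;+\; \psi\cdot\sigma_V,
\]
where $\phi$, $\psi$ are scalar bumps of width $r$ with $\phi(\tilde p)=\psi(\tilde p)=0$, $\nabla\phi(\tilde p)=\nabla\psi(\tilde p)=0$, but Hessians of size $\Lambda$, and $\sigma_H$, $\sigma_V$ are horizontal-horizontal and vertical-vertical symmetric $2$-tensors, each vanishing on the complementary distribution. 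Since $h$ respects the orthogonal splitting $TM=\mathcal H\oplus\mathcal V$, the horizontal distribution of $\tilde g_M$ coincides with that of $g_M$, so $\pi\colon (M,\tilde g_M)\to(B,\tilde g_B)$ remains a Riemannian submersion with $\tilde g_B$ determined by the new horizontal block, giving property (1). Choosing $r\ll\epsilon/\Lambda$ yields $\|h\|_{C^1}<\epsilon$, giving (4).

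Setting $\sigma_H=e_1^*\otimes e_1^*$ and prescribing the $e_2e_2$-Hessian entry of $\phi$ at $\tilde p$ to be of size $\Lambda\gg K$, a direct second-order curvature expansion in normal coordinates gives $\sec(\tilde g_B)(\sigma)|_p<-K$, the refined negative-curvature bound, which combined with unchanged sectional curvatures on the complement of the support of $h$ establishes (3). The Gray--O'Neill $A$-tensor is defined by a Lie bracket and the unchanged vertical projection, hence is metric-independent on basic horizontal fields; so by the horizontal O'Neill equation, the same drop $\Delta\sec^{\tilde g_M}(\tilde e_1,\tilde e_2)|_{\tilde p}<-K$ occurs in $M$. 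This is the obstruction: a single horizontal sectional curvature of $M$ also falls by $\Lambda/2$, threatening $\ric_k^{\tilde g_M}>0$. The role of the vertical summand $\psi\cdot\sigma_V$ is to compensate exactly this. Taking $\sigma_V$ proportional to the vertical metric and $\psi=\psi(x)$ depending only on horizontal coordinates, the connection symbol $\Gamma^{\mathcal H}_{\mathcal V\mathcal V}$ acquires a first-order term in the horizontal directions, and the mixed O'Neill formula $R^M(X,V,V,X)\supset g((\nabla_XT)(V,V),X)$ produces a boost of every mixed sectional curvature $\sec^{\tilde g_M}(\tilde e_i,V_\alpha)$ equal to $-\tfrac12\hess\psi(\tilde e_i,\tilde e_i)$ per orthonormal vertical $V_\alpha$. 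Choosing $\hess\psi$ diagonal with entries $-\Lambda/(k-b+1)$ in the $\tilde e_1,\tilde e_2$ directions and $0$ elsewhere gives a positive boost of $\Lambda/(2(k-b+1))$ in precisely the two horizontal directions implicated in the base perturbation.

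The heart of the proof is a pigeonhole estimate on traces. For any unit $u\in T_{\tilde p}M$ and any $k$-dimensional $W\subset u^\perp$, the hypothesis $k\geq b$ gives $\dim(W\cap\mathcal H)\leq b-1$, forcing $\mathrm{tr}(P_W|_{\mathcal V})\geq k-b+1$. The negative contribution to $\sum_{i=1}^k\Delta\sec^{\tilde g_M}(u,w_i)$ from $\phi\cdot\sigma_H$ is at most $\Lambda/2$ (a quadratic form in the $\tilde e_1,\tilde e_2$-components of $u$ and of the $w_i$, bounded by this on unit vectors), while the positive contribution from $\psi\cdot\sigma_V$ is at least $\tfrac{\Lambda}{2(k-b+1)}\cdot\mathrm{tr}(P_W|_{\mathcal V})\geq\Lambda/2$, so the two cancel to leading order. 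A careful accounting of the cross terms in the quadratic forms, together with the bound $r\Lambda<\epsilon$ controlling the variation of $h$ across its support, yields $\sum_i\Delta\sec^{\tilde g_M}(u,w_i)\geq-\epsilon$ uniformly in $(u,W)$ and throughout $\mathrm{supp}(h)$; this gives property (2) and the final refinement $\ric_k(M,\tilde g_M)\geq\min\ric_k(M,g_M)-\epsilon$. I expect the main obstacle to be the precise bookkeeping of the off-diagonal mixed curvature contributions from the $\nabla T$-perturbation away from the center $\tilde p$, where the Hessians of $\phi,\psi$ no longer have their optimal diagonal form; these terms are quadratic in the bump amplitude times $|x-\tilde p|$ and should be absorbed by choosing $r$ an order smaller than $\epsilon/\Lambda$.
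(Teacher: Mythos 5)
Your overall architecture — a $C^1$-small, $C^2$-large deformation supported near the fiber over $p$, a horizontal perturbation to push the base curvature down, a vertical perturbation to boost the mixed $\mathcal H$--$\mathcal V$ sectional curvatures, and a pigeonhole on $k \geq b$ to balance the two — is the same as the paper's. But your rank-one choice $\sigma_H = e_1^* \otimes e_1^*$ with $\hess\phi$ concentrated in $e_2$ cannot deliver the refined claim~\eqref{real neg}, which asserts that \emph{every} sectional curvature of $(B,\tilde g_B)$ at $p$ is below $-K$, not just the one through the plane $\mathrm{span}(e_1,e_2)$. A rank-one perturbation of the horizontal block leaves $\sec(e_i, e_j)$ at $p$ unchanged whenever $\{i,j\} \not\supset \{1\}$ or $\not\ni 2$ (for instance $\sec(e_3,e_4)$ is untouched), so $\sec(B,\tilde g_B)|_p < -K$ fails for $b\geq 3$. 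The paper instead uses a conformal change $\tilde g_B = e^{2\omega_h}g_B$ with $\hess\omega_h(Z,Z) \geq C_h$ in all directions at $p$; by the conformal curvature formula this drops every sectional curvature at $p$ simultaneously (Proposition~\ref{neg Ricci at p}). That forces the vertical boost to be likewise uniform (Hessian of $\omega_v$ large negative in all horizontal directions), which is why the paper conformally scales the whole vertical block. Your rank-two $\hess\psi$ concentrated in $e_1,e_2$ cannot compensate a full-rank horizontal drop.

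There is also a genuine gap in what you call the ``careful accounting of the cross terms in the quadratic forms.'' The definition of $\ric_k^{\tilde g_M}>0$ quantifies over \emph{all} orthonormal $(k+1)$-frames, not frames adapted to $\mathcal H \oplus \mathcal V$, so the pigeonhole bound $\mathrm{tr}(P_W|_{\mathcal V})\geq k-b+1$ on its own does not convert to a curvature inequality: an orthonormal basis of $W$ need not align with the splitting, and the trace identity must be fed through the full quadratic form of $\mathfrak D\mathcal R$ on $\Lambda^2 TM$, where the off-diagonal blocks $\mathcal H\wedge\mathcal H\to\mathcal H\wedge\mathcal V$, etc., now matter. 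In the paper this is exactly the content of Lemma~\ref{verification cor}: one shows $\mathfrak D\mathcal R_M$ is $\delta$-close to a block-diagonal operator (conditions~\eqref{abstract condition for closeness--2}--\eqref{just better than--XV}) and then invokes the Reiser--Wraith linear-algebra lemma. Your proposal has no analogue of this step, and because your $\mathfrak D\mathcal R$ is intentionally anisotropic (concentrated on $e_1,e_2$), the block-diagonal approximation fails, so the Reiser--Wraith mechanism would not be available even if you tried to use it. Finally, for $\pi$ to remain a Riemannian submersion the horizontal block of $\tilde g_M$ must be $\pi$-basic; a bump $\phi$ ``supported in a small ball around $\tilde p\in M$'' is not basic — you need $\phi=\phi_B\circ\pi$ for some $\phi_B$ on $B$, as the paper arranges by constructing $\omega_h$ on $B$ and pulling back.
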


\red{When $k=n-1$, $M$ has positive Ricci curvature. So Theorem \ref{trying too hard} implies that   among metrics with positive Ricci curvature for which a given submersion is Riemannian, those that do not project to positive Ricci curvature always exist, and in fact are dense in the $C^1$-topology. We record this observation in the following corollary.}

\begin{coralph}\label{PW pheonenon is C-1 dense} \red{Let $\pi :M \to B $ be a  submersion from a compact $n$-manifold. Let $\mathcal{R}_{\pi}^+$ be the set of Riemannian metrics $g$ on  $M$ with respect to which $\pi  :M \to B$ is  Riemannian and $ \ric(M,g) >0$. Then the subset of $\mathcal{R}_{\pi}^+$ for which the induced metric on $B$ has Ricci curvature of both signs is $C^1$-dense in $\mathcal{R}_{\pi}^+$.}
\end{coralph}
 \red{
In the event that the submersion $\pi :(M,g_{M})\to (B,g_{B})$ of Theorem \ref{trying too hard} is the quotient map of a free isometric action, our metric transformation, 
  $g_M \rightsquigarrow \tilde{g}_M $, preserves this property (see Section \ref{4_definition_of_the_metric}). More formally, the following is a corollary of the proof of Theorem \ref{trying too hard}.}

\begin{coralph}\label{grp action cor}\red{
Let $\pi :(M,g_{M})\to (B,g_{B})$ be as in the hypothesis of
Theorem \ref{trying too hard}. If, in addition, $\pi $ is the quotient map of a free isometric
group action 
\begin{equation}\label{iso}
G\times M\to M,
\end{equation}
the metric $\tilde{g}_{M}$ in the conclusion of Theorem \ref{trying too hard}, can be chosen so
that (\ref{iso}) is also isometric with respect to $\tilde{g}_{M}.$}
\end{coralph}

\begin{remark-nonumb}

In \cite{galaz-garcia}, Galaz-Garc\'{i}a, Kell, Mondino, and Sosa, show that four of the synthetic analogs of a lower bound on Ricci curvature {\em are} preserved under the quotient maps of  isometric group actions. In light of this and the results of this paper, it is natural to ask how one should define the synthetic analog of intermediate Ricci curvature bounded from below.  
\end{remark-nonumb}

\red{In Section \ref{2_notation} we establish notation and review the definition of the $C^1$-topology on the space of $(0,2)$ tensors.} In particular, because it is convenient for the proof of Theorem \ref{trying too hard}, we define the $C^1$-norm of a $(0,2)$-tensor relative to fixed background metric. We then compare this definition to the standard one that is given in terms of a fixed atlas. 

In Section \ref{3_construction_of_functions}, we show how to construct $C^1$\red{-small} functions while controlling  their Hessians. The main result of the section, Lemma \ref{existence lemma omega}, will be used later in the paper to construct functions, $\omega_h,\omega_v :B \to \R$, which we use to define the metrics $\tilde{g}_M$ and $\tilde{g}_B$ that prove Theorem \ref{trying too hard}. The way that  $\omega_h$ and $\omega_v$ determine  $\tilde{g}_M$ and $\tilde{g}_B$ is specified in Section \ref{4_definition_of_the_metric}.

In Section \ref{5_verification_on_the_base}, we verify Inequality (\ref{real neg}). Since Inequality (\ref{real neg}) asserts that {\em all} sectional curvatures of  $B$ at $p$ are  smaller than $-K$, a  fortiori, 
$$
\ric (B,\tilde{g}_{B})|_p < -K (b-1).
$$
This together with Theorem 2 of \cite{pro_wilhelm} implies that $\tilde{g}_B$ has Ricci curvature of both signs, provided $ \ric_k(M,\tilde{g}_{M}) >0$.
Section \ref{6_methods_to_verify_PIRC} establishes a  method to verify $\ric^M_k>0$, which is based in part, on a lemma of Reiser and Wraith from \cite{reiser_wraith}. Finally, Section \ref{7_analysis_on_the_total_space} completes the proof of Theorem \ref{trying too hard} by verifying positive intermediate Ricci curvature on the total space\red{, and in the final paragraph of the paper we explain how Corollary \ref{grp action cor} follows from the proof of  Theorem \ref{trying too hard}.}

\begin{remark-nonumb}
    An earlier draft of this paper appeared as a chapter in the first author's doctoral dissertation \cite{elhasan}.
\end{remark-nonumb}

{\color{red}\begin{acknowledgment}
    We are grateful to the referee for their careful reading and thoughtful critiques of the original submission. We are especially grateful for the recommendation that Corollary \ref{PW pheonenon is C-1 dense} be formally stated and for the question that lead us to realize that Corollary \ref{grp action cor} is true and a worthy addition to the paper. 
\end{acknowledgment}}
\section{Notation and Preliminary Results}\label{2_notation}
We denote the Riemannian connection, curvature tensor, and curvature operator by $\nabla $, $R$, and $\cal R $ and the vertical and horizontal distributions of a submersion $\pi:(M^n,g_M)\to(B^b,g_B)$ by $\cal V$ and $\cal H$, respectively. The horizontal and vertical components of a vector $E$ are denoted by $E^{\mathbf{h}}$ and $E^{\mathbf{v}}$, respectively. We will write $X$, $Y$, and $Z$ to represent horizontal vector fields, and $T$, $U$, and $V$ to represent vertical vector fields.

We will rely on several curvature calculations from Gromoll and Walschap's book (\cite{gromoll_walschap}), so to facilitate the verification of our assertions, we adopt the notation system in \cite{gromoll_walschap} for the fundamental tensors of a Riemannian submersion, rather than those of Gray and O'Neill (\cite{Gray}, \cite{oneill}). For the reader's convenience, we review these here. 

We define the tensor $A:\cal H\times\cal H\to\cal V$ by
\begin{equation}\label{A-tensor}
A_X Y = \frac{1}{2}[X,Y]^\v,
\end{equation}
and its adjoint $A^*:\cal H\times\cal V\to\cal H$ is denoted $A^*_X V$. We also define the shape operator  $S:\cal H\times\cal V\to\cal V$ and second fundamental form of the fibers $\sigma:\cal V\times\cal V \to\cal H$ 
by
\begin{eqnarray}\label{S-tensor}
S_X U & = & -(\nabla_U X)^\v \; \; \text{and} \\
 \sigma(U,V) &  = & (\nabla_U V)^\h.   \label{sigma-tensor}
\end{eqnarray}
The operation $\circ$ denotes the Kulkarni--Nomizu product of two $(0,2)$-tensors, which is given by
\begin{align}\label{Kulkarni Nomizu}
    \begin{split}
    \alpha\circ\beta(v_1,v_2,v_3,v_4) 
    =\;&
    \frac{1}{2}\left( \alpha(v_1, v_4)\beta(v_2, v_3) + \alpha(v_2, v_3)\beta(v_1, v_4) \right)\\
    &-
    \frac{1}{2}\left( \alpha(v_1, v_3)\beta(v_2, v_4) + \alpha(v_2, v_4)\beta(v_1, v_3) \right). \;\; 
    \end{split}
\end{align}
(See Exercise 3.4.23 in \cite{petersen_book} \red{for more information about the Kulkarni--Nomizu product}.)

\subsection{\red{Decreasing Horizontal Intermediate Ricci curvatures}}\label{Proof of Part A} 
\red{Next we briefly explain how Part 1 of Theorem \ref{main-theorem} follows from the Gray–O’Neill Equation. }

\red{
Suppose that $\ric_{k}^M >0$ for some $k\in \left\{1,2,\ldots b-1\right\},$ and let $\left\{ u,v_{1},\ldots,v_{k}\right\} \subset TB $ be any  orthonormal set.
 Let $\left\{ \tilde{u},\tilde{v}_{1},\ldots ,\tilde{v}_{k}\right\} $ be the horizontal lift of $\left\{ u,v_{1},\ldots ,v_{k}\right\} $ to $TM.$ Then 
\begin{eqnarray*}
\sum_{i=1}^{k}\sec^{B}\left(u,v_{i}\right) &=&\sum_{i=1}^{k}\left( \sec^{M}\left( \tilde{u},\tilde{v}_{i}\right) +3\left\vert A_{\tilde{u}}\tilde{v}_{i}\right\vert ^{2}\right)  \\
&\geq &\sum_{i=1}^{k}\sec^{M}\left( \tilde{u},\tilde{v}_{i}\right) 
\\
&>&0,
\end{eqnarray*}
where $A$ is the Gray--O'Neill $A$-tensor. Thus $\ric_k^B>0$ as claimed in Part 1 of Theorem \ref{main-theorem}.}

\subsection{\texorpdfstring{$C^{1}$-Topology on the space of $(0,2)$--tensors}{C¹-Topology on the space of (0,2)--tensors}}

Fix a compact smooth manifold $M${\color{red}, and a $(0,2)$-tensor  $h$ on  M. The typical definition of the $C^0$- and $C^1$-norms of $h$ depend on a choice of a fixed finite atlas $\mathcal{A}$. For the readers convenience, we repeat them formally here.}  
{\color{blue} We refer the reader  to Chapter 2 of Hirsch's textbook (\cite{Hirsch_1997}) for the corresponding definitions of the $C^0$-norm and $C^1$-norms of smooth real valued functions on $M$.}

\begin{defn}
Let $M$ be a compact, smooth $n$-manifold with fixed finite atlas \red{$\mathcal{A}:=\{(U,\phi _{U})\}$}. Assume that for all $(U,\phi _{U}) \in \mathcal{A}$, there is a neighborhood $V$ of the closure of $U$ so that the
\begin{equation}\label{extended chart}
  \text{coordinate chart} \;  \phi \; \text{extends to} \;  V.
\end{equation}
Given $(U,\phi _{U})\in \mathcal{A}$ and a $(0,2)$-tensor $h$, write 
\begin{equation*}
h_{i,j}:=h(E_{i},E_{j}),
\end{equation*}%
where $E_{i},E_{j}$, are coordinate fields of  $(U,\phi _{U})$.
The $C^{0}$-norm of $h$ with respect to $\mathcal{A}$ is then 
\begin{equation*}
\norm{h}_{C^{0},\mathcal{A}}:=\max h(E_{i},E_{j}),
\end{equation*}
where the maximum is taken  over all of the coordinate fields $E_{i},E_{j}$ of all of the coordinate charts of $\mathcal{A}.$ Similarly, the $C^{1}$-norm of a $(0,2)$-tensor $h$ with respect to $\mathcal{A}$ is  
\begin{equation*}
\left\Vert h\right\Vert _{C^{1},\mathcal{A}}:=\max \{|\partial
_{k}h_{i,j}|,\left\Vert h\right\Vert _{C^{0},\mathcal{A}}\}
\end{equation*}
where the maximum is taken over all of the coordinate fields $E_{i},E_{j}$\red{,} $E_{k}$ of all of the coordinate charts of $\mathcal{A}.$ 
\end{defn}

For us, it will be simpler and more convenient to define these notions in terms of a fixed background Riemannian metric $g_0$. 
 
\begin{defn}
Fix a compact Riemannian manifold $(M,g_0)$. 
    Given $\epsilon >0$ and a $(0,2)$-tensor $h$ we say that 
    \begin{equation*}
    \norm{h}_{C^{0},g_0}<\epsilon,
    \end{equation*}
    provided for all $u\in TM$,
    \begin{equation*}
     \abs{h(u,u)}\leq \epsilon g_0(u,u).
    \end{equation*}
\end{defn}

\begin{defn}
    Let $(M,g_0)$ be a compact Riemannian manifold. Given $\epsilon >0$ and a $(0,2)$-tensor $h$, we say that 
    \begin{equation*}
    \norm{h}_{C^{1},g_0}<\epsilon ,
    \end{equation*}
    provided
    \begin{equation*}
    \norm{h}_{C^{0},g_0}<\epsilon ,
    \end{equation*}
    and for all $u,w\in TM,$
    \begin{equation*}
    \abs{\left( \nabla_{w}^{g_0} h\right) \left( u,u\right)} \leq \epsilon
    g_0(u,u)\norm{w}_{g_0},
    \end{equation*}
    where $\nabla ^{g_0}$ denotes covariant derivative with respect to $g_0$.
\end{defn}
All four notions of norms then yield a metric space structure in the usual way, that is, 
\begin{equation*}
    \dist(h, \tilde{h}) := \| h- \tilde{h} \|,
\end{equation*}
where $\| h- \tilde{h} \|$ is any of the four notions of norms defined above. 

Finally, we record the fact for  $p\in \{0,1\}$, the two  $C^p$-norms yield the same topology on the set of $(0,2)$-tensors.

\begin{prop}\label{two norms prop}
    Let $M$ be a compact, smooth $n$-manifold with a fixed Riemannian metric $g_0$ and a fixed finite atlas 
    $\mathcal{A}:=\{(U,\phi _{U})\}$ that satisfies \eqref{extended chart}. Given $\epsilon >0$ there are $\delta ,\tilde{\delta}>0$ so that,  for $p\in \{0,1\}$ and any $(0,2)$-tensor $h$, if 
   \begin{equation} \label{coord implies invar}
    \norm{ h }_{C^{p},\mathcal{A} }  < \delta,    \text{ then }
    \norm{ h  }_{C^{p},g_0} < \epsilon,
    \end{equation}
    and if 
    \begin{equation} \label{inv implies coord}
    \norm{ h }_{C^{p},g_0}  < \tilde{\delta},    \text{ then }
    \norm{ h  }_{C^{p},\mathcal{A}} < \epsilon .
    \end{equation}
\end{prop}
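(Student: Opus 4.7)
The plan is to prove both implications for $p=0$ first, then bootstrap to $p=1$ by converting between coordinate derivatives of components and $g_0$-covariant derivatives. Throughout, I will exploit two facts: the atlas $\mathcal{A}$ is finite with charts extending beyond the closures of their domains by \ref{extended chart}, and $M$ is compact. Together these furnish \emph{uniform} bounds — taken over all charts $(U,\phi_U)\in\mathcal{A}$ and all points of $\overline{U}$ — on the components $g_0(E_i,E_j)$, on the entries of its inverse matrix, and on the Christoffel symbols $\Gamma^\ell_{ij}$ of $g_0$ in each chart. In particular there is a constant $C_0\geq 1$ such that for every $q\in U$ and every $u=\sum_i u^i E_i\in T_qM$,
\[
\tfrac{1}{C_0}\sum_i (u^i)^2 \;\leq\; g_0(u,u) \;\leq\; C_0\sum_i(u^i)^2.
\]

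For the $C^0$ case, \ref{coord implies invar} follows from the estimate $|h(u,u)|\leq \delta\sum_{i,j}|u^i u^j|\leq n\delta\sum_i(u^i)^2\leq nC_0\delta\cdot g_0(u,u)$, so $\delta:=\epsilon/(nC_0)$ works. For \ref{inv implies coord}, polarize to write $h(E_i,E_j)$ as a linear combination of terms of the form $h(v,v)$, estimate each via $|h(v,v)|\leq\tilde\delta\,g_0(v,v)$, and apply the uniform bound $g_0(E_i,E_i)\leq C$ to conclude $|h_{ij}|\leq C'\tilde\delta$ for a uniform constant $C'$.

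For the $C^1$ case, the new ingredient is the pointwise identity
\[
\partial_k h_{ij} \;=\; (\nabla^{g_0}_{E_k}h)(E_i,E_j) + h(\nabla^{g_0}_{E_k}E_i,E_j) + h(E_i,\nabla^{g_0}_{E_k}E_j),
\]
which shows that $\partial_k h_{ij}$ and $(\nabla^{g_0}_{E_k}h)(E_i,E_j)$ differ only by terms of the form $\Gamma^\ell_{ki}h_{\ell j}+\Gamma^\ell_{kj}h_{i\ell}$. Since the Christoffel symbols are uniformly bounded and the $C^0$ step already compares $\norm{h}_{C^0,g_0}$ with $\norm{h}_{C^0,\mathcal{A}}$, smallness of either $|\partial_k h_{ij}|$ or $|(\nabla^{g_0}_{E_k}h)(E_i,E_j)|$ passes to the other up to uniform multiplicative constants. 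To complete \ref{coord implies invar} for $p=1$, polarize the trilinear form $(w,u,v)\mapsto(\nabla^{g_0}_w h)(u,v)$ and expand $u$ and $w$ in coordinates as in the $C^0$ step to obtain $|(\nabla^{g_0}_w h)(u,u)|\leq\epsilon\,g_0(u,u)\norm{w}_{g_0}$; \ref{inv implies coord} runs the same argument in reverse.

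The argument is not conceptually difficult; the main obstacle is bookkeeping, namely assembling the dimension-dependent constants coming from polarization, from coordinate expansion, and from the Christoffel correction in the correct order, and observing that \ref{extended chart} is precisely what allows the supremum on $U$ to be realized on the compact set $\overline{U}$ and thus to furnish the uniform constants used throughout.
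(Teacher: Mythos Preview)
Your proposal is correct and follows essentially the same approach as the paper: both arguments use compactness together with the extended-chart hypothesis \eqref{extended chart} to obtain uniform comparison constants between the Euclidean and $g_0$-lengths of vectors, handle the $C^0$ case via this comparison, and then reduce the $C^1$ case to the $C^0$ case via the Leibniz-type identity $\partial_k h_{ij}=(\nabla^{g_0}_{E_k}h)(E_i,E_j)+h(\nabla^{g_0}_{E_k}E_i,E_j)+h(E_i,\nabla^{g_0}_{E_k}E_j)$. The only cosmetic difference is that you phrase the correction terms using Christoffel symbols and polarization, whereas the paper expands $\nabla^{g_0}_W U$ in the coordinate basis directly; the content is the same.
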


\begin{proof}
For  $u \in TM$ we set, 
\[\norm{u}^{\max}_{\eu} := \max \left\{ \norm{d \phi _{O} (u )}_{\eu} \; | \; u \in TO, \; \text{where} \; (O,\phi _{O})\in \mathcal{A} \right\},
\]
that is $\norm{u}^{\max}_{\eu}$ is the largest of all of the possible euclidean lengths of $d \phi _{O} (u)$ where $(O,\phi _{O})$ is any of the  finite number of charts of  $\mathcal{A}$ that contain the foot point of $u$.

Since $\mathcal{A}$ satisfies \eqref{extended chart} and  there are only a finite number of charts in $\mathcal{A}$, there is a  $K>1$ so that for all $u\in TM$,    
\begin{equation}\label{compactness}
    \frac{\norm{u}^{\max}_{\eu}}{K} \leq   \norm{u}_{g_0}   \leq K \norm{u}^{\max}_{\eu},
\end{equation}
and if  $U,W$ are any coordinate fields of any chart $(Q,\mu)$ of $\mathcal{A}$, then 
\begin{equation}\label{compactness--3}
   \max\left\{ \ \norm{\nabla_{W}^{g_0} U}_{\eu}^{\max}  \right\} \leq K.
\end{equation}
The versions of  (\ref{coord implies invar}) and  (\ref{inv implies coord}) for the  $C^0$-topology follow from \eqref{compactness}.

If $\{ E_i \}$ are the coordinate fields of the chart $(Q,\mu)$ and $\langle \cdot  ,\cdot \rangle_Q$ is the euclidean dot product in $\mu(Q)$, then 
\begin{equation*}\label{covar norm}
 \nabla_{W}^{g_0} U = \sum_{i=1}^n \langle \nabla_{W}^{g_0} U , E_i\rangle_Q E_i.
\end{equation*}
Thus for any other coordinate field  $V$ of $(Q,\mu)$,
\begin{eqnarray}\label{compactness--4}
\abs{h  (\nabla_{W}^{g_0} U,V)} & \leq& \abs{h  \protect\red{\left(\sum_{i=1}^n \langle \nabla_{W}^{g_0} U , E_i\rangle_Q E_i,V\protect\right)}} \nonumber  \\
&\leq& \sum_{i=1}^n \abs{\inn{\nabla_W U, E_i}_Qh(E_i,V)}  \nonumber \\
&\leq& Kn \norm{h} _{C^{0},\mathcal{A}},
\end{eqnarray}
by \eqref{compactness--3}. 

This gives us
\begin{eqnarray*}
    \abs{ ( \nabla_{W}^{g_0} h ) (U,V)} 
    &\leq&
    \abs{D_{W}h(U,V)} 
    + \abs{h(\nabla_{W}^{g_0} U,V)}  + \abs{h(U, \nabla_{W}^{g_0}V)}\\
    &\leq&
    \norm{h } _{C^{1}, \mathcal{A}} +
    2K n  \norm{h} _{C^{0},\mathcal{A}}  \; \; \text{by \eqref{compactness--4}} \\
    &\leq& 
    (1+2K n )  \norm{h} _{C^{1}, \mathcal{A}}.
\end{eqnarray*}
The version of (\ref{coord implies invar}) where $p=1$ follows by combining this with \eqref{compactness} and the version of (\ref{coord implies invar}) for $p=0$.

In the event that $\norm{h}_{C^{1}, \mathcal{A} } = \norm{h}_{C^{0}, \mathcal{A} } $,  the version of
(\ref{inv implies coord}) for $p=1$  follows from the  version for $p=0$. Otherwise, we let $U,V,$ and $W$ be coordinate fields so that at a point  $p$,
\[\abs{D_{W}h(U,V )} = \norm{h}_{C^{1}, \mathcal{A} }.\]
\red{By combining} this with \eqref{compactness}  and \eqref{compactness--4} we see that
\begin{eqnarray*}
    \norm{h}_{C^{1}, \mathcal{A} }
    & = & 
    \abs{D_{W} h (U,V) )}  \\ 
    &\leq&  
    \abs{\left(\nabla_{W}^{g_0} h\right) (U, V) }
    + \abs{h  (\nabla_{W}^{g_0} U,V)}   + \abs{h  ( U, \nabla_{W}^{g_0}V)}\\
    &\leq&  
    K^3\norm{h } _{C^{1},g_0}  
    + 2 Kn \norm{h } _{C^{0},\mathcal{A}}.
\end{eqnarray*}
The version of (\ref{inv implies coord}) where $p=1$ follows this and the version of (\ref{inv implies coord}) where $p=0$.
\end{proof}

We use the notation $O\left(t \right) $ to denote a number, vector, or tensor whose norm is no larger
than $Ct$, where $C$ is a positive constant that only depends on $g_0$.

We write $\inj_M$  for the injectivity radius of  $M$.


\section{Construction of Functions}\label{3_construction_of_functions}
To create the  metric $\tilde{g}_M$ that proves Theorem \ref{trying too hard},  we will construct two functions using the following lemma. 

\begin{lemma}\label{existence lemma omega}
    Let $M$ be a compact Riemannian $n$-manifold. Given any $p\in M$, $C\in\R\setminus[-1,1]$,  $\epsilon,\eta \in \left(0, \frac{1}{2} \min \{ 1,  \ \inj_M  \}\right)$, and any   $\tau\in (0,\frac{\epsilon \eta }{2|C|})$ there is a smooth function 
    \[\omega : M\to\R\]
    so that
    \begin{enumerate}
        \item $\left.(\grad \omega) \right|_p = 0$,
        \item $\norm{\omega}_{C^1} < \epsilon$,  and
        \item $\supp(\omega) \subset B(p,2\eta)$.
    \end{enumerate}
    Moreover, if  $C>0$, we can choose $\omega$ so that for all unit vectors $Z \in TM$,
    \begin{equation} \label{Hessian inequal +}
     - \epsilon \leq \hess_\omega(Z,Z) \leq 3C, 
    \end{equation}
    and for all unit vectors $Z \in TM|_{B(p,\tau)}$,
    \begin{equation} \label{Hessian inequal +b}
        C \leq \hess_\omega(Z,Z).
    \end{equation}
    If  $C<0$, we can choose $\omega$ so that for all unit vectors $Z \in TM$,
    \begin{equation} \label{Hessian inequal -}
   3C \leq \hess_\omega(Z,Z) \leq \epsilon.
    \end{equation}
    and for all unit vectors $Z \in TM|_{B(p,\tau)}$,
     \begin{equation} \label{Hessian inequal -b}
   \hess_\omega(Z,Z) \leq C.
    \end{equation}
\end{lemma}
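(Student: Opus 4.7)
Plan: I would construct $\omega$ as a radial function in normal coordinates centered at $p$, setting $\omega(x) = F(d(x,p))$ for a smooth, compactly supported $F: [0,\infty) \to \mathbb{R}$. Since $2\eta < \inj_M$, the distance $r = d(\cdot,p)$ is smooth on $B(p,2\eta) \setminus \{p\}$, and choosing $F$ to agree with a smooth function of $r^2$ near $0$ makes $\omega$ smooth through $p$. Replacing $F$ by $-F$ reduces to the case $C > 0$, so I assume this throughout.

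The key computational tool is the radial Hessian identity $\hess \omega = F''(r)\, dr \otimes dr + F'(r)\, \hess r$ combined with the normal-coordinate expansion $\hess r = \tfrac{1}{r}(g - dr \otimes dr) + O(r)$, whose error constant depends only on $g_0$. For a unit vector $Z$ with $\cos\theta := dr(Z)$, this gives
\[
\hess_\omega(Z,Z) = F''(r)\cos^2\theta + \frac{F'(r)}{r}\sin^2\theta + O(rF'(r)),
\]
a convex combination of the radial Hessian $F''$ and the tangential Hessian $F'/r$ plus a small curvature error. The task reduces to designing $F$ so that both $F''$ and $F'/r$ lie in $[-\epsilon, 3C]$ globally, are $\geq C$ on $[0,\tau]$, and the error $O(rF')$ is $o(\epsilon)$.

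On $[0,\tau]$ I would take $F(r) = \tfrac{\alpha}{2} r^2$ with $\alpha = C(1 + O(\tau^2))$ slightly inflated to absorb the curvature correction, so that both $F'' = \alpha$ and $F'/r = \alpha$ exceed $C$ there. On $[\tau,2\eta]$ I prescribe $F'$ piecewise: a descent phase where $F'$ falls linearly from $\alpha\tau$ to a negative minimum $-m$ at rate $|F''| = \epsilon$, followed by a recovery phase where $F'$ rises linearly back to $0$ at rate $|F''| = 3C$. Choosing $m = O(C\tau)$ so that $\int_\tau^{2\eta} F'\,dr = -F(\tau)$ forces $F(2\eta) = 0$. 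The hypothesis $\tau < \epsilon\eta/(2|C|)$ ensures that the total descent-plus-recovery length fits inside $2\eta - \tau$ and, crucially, that the minimum of $F'$ occurs at radius $r^* \gtrsim m/\epsilon$, which is exactly what is needed to obtain $F'/r \geq -\epsilon$ pointwise. Finally I mollify the piecewise-linear $F'$ at a scale much smaller than all other quantities; since convolution with a nonnegative mollifier preserves upper and lower pointwise bounds, the inequalities on $F''$ survive, and a slight tightening of the initial constants absorbs any remaining slack.

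Verification of the listed properties is then routine. Property (1) follows from $F'(0)=0$, property (2) from $|F| \leq \alpha\tau^2 \ll \epsilon$ together with $|F'| \leq \alpha\tau < \epsilon$ (using $C\tau < \epsilon\eta/2$), and property (3) is built into the construction. The Hessian bounds $\ref{Hessian inequal +}$ and $\ref{Hessian inequal +b}$ follow from the convex-combination formula above combined with the designed bounds on $F''$ and $F'/r$; the case $C<0$ follows by the $F \mapsto -F$ symmetry, reversing the inequalities to give $\ref{Hessian inequal -}$ and $\ref{Hessian inequal -b}$. The main obstacle is maintaining the tangential-Hessian bound $F'/r \geq -\epsilon$ throughout $[\tau,2\eta]$: because $F'$ must necessarily become negative to drive $F$ back down to $0$ while $r \geq \tau$ is still small, the tangential Hessian $F'/r$ threatens to blow up negative; the slack $\tau < \epsilon\eta/(2|C|)$ is precisely what creates enough room in $[\tau,2\eta]$ to push the negative dip of $F'$ out to a radius where $|F'|/r \leq \epsilon$.
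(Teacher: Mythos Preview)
Your approach is correct and shares the paper's overall strategy: define $\omega$ radially as $\phi\circ\dist_p$ and control the Hessian via the splitting into the radial part $\phi''$ and the tangential part $\phi'(r)\hess_{\dist_p}\approx \phi'(r)/r$. The difference lies entirely in how the one--variable profile is engineered. The paper never lets $\phi'$ go negative on $[0,\eta]$: it sets $\phi$ equal to the double integral of a narrow nonnegative bump $h_\nu$ supported in $[-\nu,\nu]$ with $\nu<\epsilon\eta/(2C)$, so that $\phi'\ge 0$ on $[0,\eta]$ (forcing the tangential Hessian to be $\ge 0$ there), and then invokes a separate gluing lemma to interpolate $\phi$ to $0$ on $[\eta,2\eta]$, where $\phi$ is already $C^2$--small of order $\epsilon\eta^3$, so that $|\phi'|/r<\epsilon$ trivially. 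You instead allow $F'$ to dip negative immediately after $\tau$ and rely on the key observation that a descent at slope $-\epsilon$ automatically pushes the location of the minimum out to $r^*>m/\epsilon$, which is exactly what yields $|F'|/r\le\epsilon$ throughout; this is a genuinely different (and slightly more delicate) mechanism for securing the tangential lower bound. The paper's route is more modular---the gluing lemma is reusable and the ``inner large / outer tiny'' dichotomy avoids any bookkeeping of $F'/r$ in the negative regime---while your route is more self--contained and makes the role of the hypothesis $\tau<\epsilon\eta/(2|C|)$ more transparent. One small point to tidy: as in the paper, you may need to first shrink $\eta$ so that the $O(r)$ error in $\hess_{\dist_p}$ is uniformly small, and then observe that the conclusion for a smaller $\eta$ implies it for the given one; and your mollification step should be done on $F''$ (or only near the corners) to preserve exactly the integral constraint that forces $F(2\eta)=0$.
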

Before proving Lemma \ref{existence lemma omega} we establish two preliminary single variable calculus results.
\begin{lemma}\label{gluing_lemma}
\red{For every $\epsilon, \eta >0$, $K>1$, and $\delta \in (0, \frac{ \epsilon}{ 3K } )$, 
 let $\lambda :\mathbb{R}\to \left[ 0,1\right] $ be a $C^2$-smooth function satisfying}
\begin{eqnarray}
\lambda |_{\R\setminus[-2\eta,2\eta]} &\equiv& 0, \nonumber \\
\lambda |_{[-\eta,\eta] } &\equiv& 1, \quad \text{and} \nonumber\\
\| \lambda \|_{C^2} &<& K. \label{C 2 norm}
\end{eqnarray}
Let $f,g:\left[ -3\eta,3\eta\right] \to \R$ be $C^2$ and set
\begin{equation}\label{weighted aver}
\phi :=\lambda f + (1-\lambda)g.
\end{equation}
If
\begin{equation}\label{C^1 small}
   \norm{f-g}_{C^{1}}<\delta, 
\end{equation}
then
\begin{eqnarray}\label{da glue}
\norm{(f-\phi) }_{C^{1}}<\epsilon, \; \;
\norm{(g-\phi) }_{C^{1}}<\epsilon, \; \; \text{and} \; \; \norm{( \phi'' - ( \lambda f'' + (1-\lambda)g'')) }_{C^{0}} <\epsilon.
\end{eqnarray}
\end{lemma}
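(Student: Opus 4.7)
The plan is to reduce everything to the identity $f - \phi = (1-\lambda)(f-g)$, which immediately shows that the error in $C^0$ is controlled by $\|f-g\|_{C^0}$ alone. Differentiating once gives
\[
(f-\phi)' = -\lambda'(f-g) + (1-\lambda)(f'-g'),
\]
so using $|\lambda|, |\lambda'| \leq K$ from \eqref{C 2 norm}, I get $\|f-\phi\|_{C^1} \leq (K+1)\|f-g\|_{C^1}$. The analogous identity $g - \phi = \lambda(f-g)$ (up to sign) handles the second bound in \eqref{da glue} in the same way.

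For the third bound, I would compute $\phi''$ directly from \eqref{weighted aver}. The key observation is that the $f''$ and $g''$ terms appear exactly in the combination $\lambda f'' + (1-\lambda)g''$, so the remainder consists only of derivatives of $\lambda$ multiplied by low-order differences of $f$ and $g$. Explicitly,
\[
\phi'' = \lambda''(f-g) + 2\lambda'(f'-g') + \lambda f'' + (1-\lambda)g'',
\]
so
\[
\phi'' - \bigl(\lambda f'' + (1-\lambda)g''\bigr) = \lambda''(f-g) + 2\lambda'(f'-g').
\]
Again using $|\lambda'|, |\lambda''| \leq K$, this is bounded by $3K\,\|f-g\|_{C^1}$.

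With these three estimates in hand, setting $\delta := \epsilon/(3K+1)$ (or any similarly small constant depending only on $\epsilon$ and $K$) simultaneously forces all three quantities in \eqref{da glue} to be strictly less than $\epsilon$, which is exactly the conclusion. The choice of $\delta$ is uniform over all admissible $\eta$ and all $\lambda$ satisfying the three hypotheses, as required.

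There is no real obstacle here; the proof is a direct calculation combined with the triangle inequality. The only subtlety worth flagging is that the hypothesis is $C^1$-smallness of $f-g$, not $C^2$-smallness, so one must avoid ever isolating $f''$ or $g''$ outside the combination $\lambda f'' + (1-\lambda)g''$. The algebraic identity for $\phi''$ above does exactly this, and that is the entire content of the lemma.
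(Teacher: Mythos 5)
Your proposal is correct and follows essentially the same approach as the paper: both proofs reduce to the algebraic identities $f-\phi = (1-\lambda)(f-g)$ and $\phi'' - (\lambda f'' + (1-\lambda)g'') = \lambda''(f-g) + 2\lambda'(f'-g')$, bound the $\lambda$-derivatives using the $C^2$-bound $K$, and pick $\delta$ proportional to $\epsilon/K$. Your write-up is a bit more streamlined in directly computing $f-\phi$ and $(f-\phi)'$, whereas the paper phrases the first two bounds in terms of $\phi$ deviating by at most $\epsilon$ from a weighted average, but the underlying estimates are the same.
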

\begin{proof}
\red{Hypotheses} (\ref{C 2 norm}), (\ref{weighted aver}), and  (\ref{C^1 small}) together give us
\begin{eqnarray}\label{deriv versus aver}
| \phi' - ( \lambda f' + (1-\lambda)g') | & = & |\lambda'(f-g)|   \\
        & < & K  \delta   \nonumber\\
        & < &  \epsilon, \qquad \protect\red{\text{since } \delta \in \left(0, \frac{ \epsilon}{ 3K } \right),} \nonumber
\end{eqnarray}
\blue{and}
\begin{eqnarray*}\label{2nd deriv versus aver}
| \phi'' - ( \lambda f'' + (1-\lambda)g'') | & = & |2 \lambda'(f'-g') + \lambda''(f-g) |   \\
        & < &  3 K  \delta    \\
        & < &  \epsilon, \qquad \protect\red{\text{since } \delta \in \left(0, \frac{ \epsilon}{ 3K } \right),} 
\end{eqnarray*}
so the last inequality in \eqref{da glue} holds.

Equation (\ref{weighted aver}) and \eqref{deriv versus aver} tell us that the values of $\phi$ and $\phi'$ never deviate further than $\epsilon$ from a weighted average of $f$ and $g$ and  $f'$ and $g'$, respectively. This together with $\norm{f-g}_{C^{1}}<\delta <\epsilon$, yields the first two inequalities in (\ref{da glue}).
\end{proof}

\begin{lemma}\label{existence_of_special_function_lemma}
    Given  $\epsilon,\eta \in(0,1)$, $C\in\R\setminus[-1,1]$, and any  $\tau\in (0,\frac{\epsilon \eta}{2|C|})$, there is a function $\phi:\R\to\R$   with the following properties:
    {\color{red}
    \begin{enumerate}
    \item \[\phi(t) = \frac{C}{2} t^2 \; \; \text{for}  \;\; t \in [-\tau ,\tau ],\]
    \item \begin{equation}\label{existence_C1}
        \norm{\phi}_{C^1}<\epsilon \text{ on } \R,
    \end{equation}
    \item \[\phi\equiv0 \text{ on } \R\setminus[-2\eta,2\eta],\]
    \item 
    \begin{equation}\label{existence_C2}
        \norm{\phi}_{C^2}<\epsilon 
        \text{ on } \R\setminus[-\eta,\eta].
    \end{equation}
    \end{enumerate}
    }
    Moreover if $C>0$, we can choose  $\phi$ so that  
    \begin{equation}\label{deriv near 0}
        \phi'|_{[0,\eta]}\geq 0 \; \; \text{and} \;\; \phi''\in(-\epsilon,C],
    \end{equation}
     and if $C<0$, we can choose  $\phi$ so that
     \begin{equation}\label{deriv near 0 cneg}
        \phi'|_{[0,\eta]}\leq 0 \; \; \text{and} \;\; \phi''\in[C,\epsilon).
    \end{equation}
   
\end{lemma}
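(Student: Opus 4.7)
We aim to build $\phi$ as an even function $\R\to\R$ whose second derivative is a prescribed smooth profile supported in $[-2\eta,2\eta]$. We treat the case $C>0$; the case $C<0$ is obtained by considering $-\phi$ and swapping \eqref{deriv near 0} with \eqref{deriv near 0 cneg}. First we prescribe $\phi''$ piecewise-constantly on $[0,2\eta]$ and then smooth the jumps.

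On $[0,2\eta]$, set
\[
\phi''(t)=\begin{cases} C & t\in[0,\tau],\\ -c_1 & t\in[\tau,\eta],\\ -c & t\in[\eta,\tfrac{3\eta}{2}],\\ +c & t\in[\tfrac{3\eta}{2},2\eta],\\ 0 & t\geq 2\eta,\end{cases}
\]
with $c_1 := C\tau/(\eta-\tau)$ and $c := 2C\tau/\eta$, and extend evenly to $t<0$. Integrating with $\phi(0)=\phi'(0)=0$ yields $\phi(t)=\tfrac{C}{2}t^2$ on $[-\tau,\tau]$. A direct computation gives $\phi'(\eta)=0$, $\phi(\eta)=\tfrac{1}{2}C\tau\eta=:M$, $\phi'(2\eta)=0$, and $\phi(2\eta)=M-c\eta^2/4=0$, so $\phi$ extends by zero outside $[-2\eta,2\eta]$, and $\phi'\geq 0$ on $[0,\eta]$ since $\phi'$ decreases linearly from $C\tau$ to $0$ on $[\tau,\eta]$.

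The hypothesis $\tau<\epsilon\eta/(2|C|)$, together with $\eta,\epsilon\in(0,1)$ and $|C|>1$, forces $\tau<\eta/2$ and hence $\eta-\tau>\eta/2$. From this one reads off $c_1,c<\epsilon$, $|\phi'|\leq C\tau<\epsilon/2$, and $|\phi|\leq M<\epsilon\eta^2/4<\epsilon$. Thus $\|\phi\|_{C^1}<\epsilon$, while on $\R\setminus[-\eta,\eta]$ the only values $\phi''$ attains are $0$ and $\pm c$, giving $\|\phi\|_{C^2}<\epsilon$ there. The values $\phi''\in\{C,-c_1,-c,+c,0\}$ lie in $(-\epsilon,C]$, and every bound is strict with a positive buffer.

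Finally, upgrade the $C^1$ candidate to a $C^2$ function by replacing each interior jump of $\phi''$ with a monotone smooth interpolation between the two adjacent plateau values, on small windows chosen to lie outside $[-\tau,\tau]$ (so $\phi=\tfrac{C}{2}t^2$ is preserved exactly there); a small retuning of $c$ or of the endpoint of the last transition absorbs the $O(h^2)$ perturbation in $\phi(2\eta)$ induced by smoothing. \textbf{The main obstacle} is carrying out this smoothing while preserving the strict bounds $\phi''\leq C$ and $\phi''>-\epsilon$: using a monotone interpolation between adjacent plateau values, rather than applying the generic convex combination of Lemma \ref{gluing_lemma} directly (which permits a small $C^0$-error beyond the weighted average), keeps $\phi''$ strictly between the two plateau values and hence inside $(-\epsilon,C]$ throughout.
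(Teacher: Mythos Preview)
Your argument is correct but takes a genuinely different route from the paper's. The paper avoids your retuning step entirely: it chooses a smooth nonnegative bump $h_\nu$ supported in a tiny interval $[-\nu,\nu]\subset(-\eta,\eta)$ with $h_\nu\equiv C$ on $[-\tau,\tau]$, sets $f(x):=\int_0^x\int_0^t h_\nu$, and then applies Lemma~\ref{gluing_lemma} with $g\equiv 0$ to cut $f$ off to zero on $[\eta,2\eta]$. Since $f''=h_\nu\in[0,C]$ is supported inside $[-\eta,\eta]$, one gets $\phi''\in[0,C]$ on $[-\eta,\eta]$ automatically and $|\phi''|<\epsilon$ outside directly from the gluing lemma---no parameters need to be rebalanced afterward. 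Your explicit four-plateau profile for $\phi''$ also works, but the smoothing-and-retune step carries the real burden: the one-sided smoothing at $t=\tau$ (required to preserve $\phi=\tfrac{C}{2}t^2$ exactly on $[-\tau,\tau]$) perturbs $\phi'$, and hence $\phi(2\eta)$, by $O(h)$ rather than $O(h^2)$; and arranging $\phi(2\eta)=\phi'(2\eta)=0$ together with $C^\infty$ matching to zero at $\pm 2\eta$ requires a short implicit-function or continuity argument that you only sketch. None of this is fatal---the strict buffers you noted absorb the perturbations---but the paper's approach trades all of this bookkeeping for a single clean invocation of Lemma~\ref{gluing_lemma}.
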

\begin{proof} \red{We first} consider the case where $C>0$. \red{The result when $C<0$ follows readily from the result for positive $C$, we explain this in more detail at the end of the proof.}  

    For some 
    \begin{equation}\label{choice of nu}
       \nu\in\left(0,\frac{\epsilon \eta }{2C}\right) 
    \end{equation}
    and $\tau \in (0, \nu)$, let $h_\nu: \R \to [0,\infty)$ satisfy
    \begin{equation}\label{existence h nu}
        h_\nu(t) = C  \quad \text{for} \quad t\in [-\tau, \tau], 
    \end{equation}
    \begin{equation}\label{existence h nu prime}
      h_\nu|_{(-\nu,\nu)} \geq 0,\quad  h'_\nu|_{(-\nu,0]} \geq 0, \quad h'_\nu|_{[0,\nu)} \leq 0,  \quad \text{and} \quad h_\nu|_{ \R\setminus[-\nu, \nu] }\equiv 0.
    \end{equation}
We set
    \[
    f(x) := \int_0^x \int_0^t h_\nu(s)\odif{s}\odif{t},
    \]
  and it follows that
    \begin{equation*}
 f'(t) := \int_0^t h_\nu(s)\odif{s}  \; \; \text{and} \;\;  f''(s)= h_\nu(s).
    \end{equation*}
  In particular, 
    \begin{eqnarray}\label{f on 0 eta}
        f'(t)\geq 0 \quad &  \text{for all}&  t\geq 0, \\
    f(t)= \frac{C}{2} t^2  & \text{for all} &  t\in [-\tau,\tau], \label{f prime prime is C on -tau tau}
    \end{eqnarray}
    and
    \begin{eqnarray*}
        \norm{f|_{[-2\eta ,2\eta]}}_{C^1} 
    &\leq& \sup_{x\in[-2\eta, 2\eta]} \abs{\int_0^x \int_0^t h_\nu(s) \odif{s}\odif{t}}
        + \sup_{t\in[-2\eta, 2\eta]}\abs{\int_0^t h_\nu(s) \odif{s}} \\
    &\leq& C\nu\eta + C\nu.
    \end{eqnarray*}
    Since 
    \[
    f''|_{\R \setminus [-\eta,\eta]} = h_\nu|_{\R\setminus[-\eta,\eta]} \equiv 0,
    \]
    we also have that $\norm{f}_{C^2}=\norm{f}_{C^1}$ on $\R \setminus [-\eta,\eta]$, so
   \begin{eqnarray*}
        \norm{f|_{[-2\eta,2\eta]\setminus [-\eta,\eta]}}_{C^2} &\leq &C\nu\eta+C\nu \\
                       &< &   \epsilon \; \; \text{by \eqref{choice of nu}}.
   \end{eqnarray*}

    To construct $\phi$, we  apply Lemma \ref{gluing_lemma} with $f$ as above and $g\equiv 0$. 
     Note that since 
    $\phi|_{[0,\eta]}= f|_{[0,\eta]}$ it follows from (\ref{f prime prime is C on -tau tau}) that 
    $$
   \phi(t) = \frac{C}{2} t^2  \;\; \text{for}\; \;  t\in [-\tau,\tau].
    $$
Similarly, $\phi|_{[0,\eta]}= f|_{[0,\eta]}$ together with (\ref{f on 0 eta}) gives us that    
    $\phi'|_{[0,\eta]}\geq 0$, as claimed in (\ref{deriv near 0}). 
   
  Combining Lemma \ref{gluing_lemma} with the fact that  the second derivatives of both $f$ and the zero function are zero on $\R \backslash [-\eta , \eta]$,  we see that 
    \begin{equation}\label{close to averg}
          \norm{ (\phi-f)|_{[-2\eta , 2\eta] \backslash [-\eta , \eta]}}_{C^2} < \epsilon  \; \; \text{and} \; \; \norm{\phi|_{\R \backslash [-\eta , \eta]}}_{C^2} < \epsilon.
    \end{equation}
    Thus (\ref{existence_C2}) holds, and we have that 
    $\phi''>-\epsilon$ on $\R \backslash [-\eta , \eta]$. Since $\phi = g\equiv0$ on $\R\setminus[-2\eta,2\eta]$, we have 
    $$
    \phi \equiv 0 \; \; \text{on} \; \; \R\setminus[-2\eta,2\eta].
    $$
    Since $\phi=f$ on $[-\eta , \eta]$, we have that $\phi''|_{[-\tau,\tau]} =f''|_{[-\tau,\tau]}=C$, and since $f'' = h$ is nonnegative on $[-\eta , \eta]$ and has a maximum at $0$ (see (\ref{existence h nu}) and (\ref{existence h nu prime})), we have that 
    \[\phi''\in (-\epsilon,C],\]
    which completes the proof of (\ref{deriv near 0}).
    
    Finally,
    \begin{eqnarray*}
     \norm{\phi|_{[-\eta , \eta]}}_{C^1} &=& \norm{f_{[-\eta , \eta]}}_{C^1} \\
                       &< &C\nu\eta + C \nu \\
                        &< & 2C\nu, \; \; \text{since $\eta <1$} \\
                    &<& \epsilon,
     \end{eqnarray*}
  since  $\nu < \frac{\epsilon }{2C}$, which together with (\ref{existence_C2}) completes the proof of (\ref{existence_C1}).

\red{
  For the case when $C<0$, we apply the result that we just proved to $|C|$ and call the resulting function  $\psi$. The desired function is then $\varphi := -\psi$. }
\end{proof}

\begin{proof}[Proof of Lemma \ref{existence lemma omega}]
Given $p, C,\epsilon, \eta$, and $\tau$ as in Lemma \ref{existence lemma omega}, we first apply Lemma \ref{existence_of_special_function_lemma} with $\epsilon$, $C$, $\eta$, and $\tau $ transformed as indicated by the following table:

\begin{equation}\label{da table}
    \begin{tabular}{|c|c|c|}
    \hline
    Lemma \ref{existence lemma omega} Quantity & $\longrightarrow $ & Lemma \ref{existence_of_special_function_lemma} Quantity
    \\ \hline
    $\epsilon $ & $\longmapsto $ & $\epsilon \eta ^{3}$ \\ \hline
    $C$ & $\longmapsto $ & $2C$ \\ \hline
    $\eta $ & $\longmapsto $ & $\eta $ \\ \hline
    $\tau $ & $\longmapsto $ & $\tau $ \\ \hline
    \end{tabular}%
    \end{equation}

Noting that if the conclusion of Lemma \ref{existence lemma omega} holds for a given $\eta \in \left(0, \frac{1}{2} \min \{ 1,  \ \inj_M  \}\right)$, then it also holds for all $\tilde{\eta} \in \left(\eta, \frac{1}{2} \min \{ 1,  \ \inj_M  \}\right) $, we will impose one further restriction on the  $\eta$ of Lemma \ref{existence_of_special_function_lemma} toward the end of the proof. 
    
Let $\phi:\R\to\R$ be the function resulting from our application of Lemma \ref{existence lemma omega}, and set
    \begin{eqnarray*}
    \omega(x) &:=& \phi \circ \left(\dist_p(x)\right) \; \; \text{and} \\
           X  &:=&  \nabla\dist_p.
    \end{eqnarray*}
Since $\phi(t) = C t^2 $ on $ [-\tau ,\tau ]$, we have that the restriction of  $\omega$ to $B(x,\tau)$ is 
    \begin{equation}\label{existence lemma omega definition}
        \omega(\cdot) = C \left(\dist_p(\cdot)\right)^2,
    \end{equation}
so $\omega$ is smooth on $B(x,\tau)$. Since $\dist_p$ is smooth on $B(p,\inj_M) \backslash \{ 0\}$, and $\supp(\phi)\subset [-2\eta,2\eta]$, and $2\eta\in(0,\inj_M)$, it follows that $\omega$ is smooth and supported in $B(p,2\eta)$.

Since $\norm{\phi}_{C^0}<  \epsilon \eta^3$, we have that $\norm{\omega}_{C^0}< \epsilon \eta^3 <\epsilon$. Moreover,
\[
(\grad \omega)|_x = \phi'\left(\dist_p (x)\right)X,
\]
and since $\abs{\phi'\left(\dist_p (x)\right)}<  \epsilon \eta^3 $, it follows that $\abs{(\grad \omega)|_x} <  \epsilon \eta^3$, and hence $\norm{\omega}_{C^1} <  \epsilon \eta^3 < \epsilon$. 

Since $\phi'(0)=0$, it follows that $(\grad \omega)|_p = 0$. 

It remains to prove Inequalities (\ref{Hessian inequal +})--(\ref{Hessian inequal -b}). Apart from mechanical changes the proofs when $C>0$ and $C<0$ are identical, so we only detail the proofs when $C>0$, that is of (\ref{Hessian inequal +}) and (\ref{Hessian inequal +b}). 

Since
\begin{eqnarray}\label{Hessian on XX}
    \hess_\omega(X,X)|_x = \phi''\left(\dist_p (x)\right) \; \; \text{and}\; \; 
\phi''\in (-\epsilon \eta^3, 2C],
\end{eqnarray}

we have that 
\[-\epsilon \eta^3 \leq \hess_\omega(X,X)\leq 2C.\]

    On the other hand, if $Y\in T_xM$ is a unit vector perpendicular to $X$, then we have $\nabla_Y \left(\phi' \circ \dist_p(\cdot)\right) = 0$, so 
  \begin{eqnarray}\label{Hess Y}
       \nabla_Y \grad \omega &=& \phi' ( \dist_p(\cdot) ) \ \nabla_Y X, \; \; \text{and} \\
                        \hess_\omega(Y,X)&=&   \phi' ( \dist_p(\cdot) ) \ g(  \nabla_Y X , X)  \nonumber\\
                                          &=&   0, \nonumber
  \end{eqnarray}
since $\| X \| \equiv 1$.
Equation (\ref{Hess Y}) also gives us
    \begin{eqnarray}\label{tang Hessian}
        \hess_\omega(Y,Y)|_x &=& g\left( \nabla_Y \grad \omega, Y \right)|_x \nonumber \\
        &=& \phi'\left(\dist_p (x)\right) g\left( \nabla_Y X , Y\right)|_x  \nonumber  \\
        &=& \phi'\left(\dist_p (x)\right) \hess_{\dist_p}(Y,Y)|_x   \nonumber \\
        &=& \phi'\left(\dist_p (x)\right) \left( \frac{1}{\dist_p (x)} + O\left( \dist_p (x) \right) \right),
    \end{eqnarray}
    where for the last equality we are using Equation 2.7.1 in \cite{searle_wilhelm}. The final restriction that we impose on the quantity $\eta$ of Lemma \ref{existence_of_special_function_lemma} will be to choose it small enough so that on $B(p,2\eta)$ the error term on the right hand side of $\eqref{tang Hessian}$ satisfies
    \begin{equation}\label{bnd on HWTL}
    \abs{O\left( \dist_p (x) \right)}< \sqrt{\eta} \; \; \text{on $B(p,2\eta)$}.
    \end{equation}

    Since $\phi'(0) = 0$ and $\phi''(t)\leq 2C$, we have $\phi'(t)\leq 2Ct$ for $t>0$. This together with (\ref{tang Hessian}), \eqref{bnd on HWTL}, and $\|\phi\|_{C^1}<\epsilon \eta^3$ gives us that for $x\in \supp(\omega)$,
    \begin{eqnarray*}
    \hess_\omega(Y,Y) &\leq&
    2C\dist_p (x) \frac{1}{\dist_p (x)} + \epsilon\eta^3 \cdot O\left( \dist_p (x) \right) \\
     &\leq& 2C +  \epsilon\eta^3 \\
    &\leq& 3C, \; \; \;  \text{since $\epsilon, \eta \in (0, \frac{1}{2}  )$ and $1< C$.}
    \end{eqnarray*}
Since   $\phi'|_{[0,\eta]} \geq 0$ and $\eta \in (0, \frac{1}{2})$,  (\ref{tang Hessian}) and \eqref{bnd on HWTL} give us
\begin{equation*}
     0 \leq \hess_\omega(Y,Y)|_{B(p,\eta)}.
    \end{equation*}
Since $\| \phi' \| \leq \epsilon\eta^3$ and $\supp(\omega) \subset B(p,2\eta)$, (\ref{tang Hessian}) and \eqref{bnd on HWTL} also yield
 \begin{eqnarray*}
 \hess_\omega(Y,Y)|_{M\backslash B(p,\eta)} \geq - 2 \frac{\epsilon \eta^3}{\eta} \geq -\epsilon.
    \end{eqnarray*}
    Hence for all unit vectors  $z\in TM$,
    \[- \epsilon \leq \hess_\omega(Z,Z) \leq 3C,\]
    as claimed in (\ref{Hessian inequal +}).
    
To show that (\ref{Hessian inequal +b}) holds, for 
$Z \in TM|_{B(p,\tau)} $ we note that since for $t\in [0, \tau]$, $\phi''(t)= 2C$,
\begin{equation*}
  \hess_\omega(X,X)|_{B(p,\tau)}  \equiv 2 C,
\end{equation*}
   and $\phi'(t)= 2Ct$ , so (\ref{tang Hessian}) and \eqref{bnd on HWTL} give us that for any unit vector $Y \perp X$ with $Y \in TM|_{B(p,\tau)}$,
     \begin{eqnarray*}
    \hess_\omega(Y,Y) &= &
    \phi'\left(\dist_p (x)\right) \left( \frac{1}{\dist_p (x)} + O\left( \dist_p (x) \right) \right) \\
    &\geq& 2C \dist_p (x) \left( \frac{1}{\dist_p (x)} - \sqrt{\eta} \right)\\
    &> & C,
    \end{eqnarray*}
    since we are at a point where $\dist_p (x) < \tau <\eta < \frac{1}{2}$ and $C >1$.
\end{proof}

Our applications of Lemma \ref{existence lemma omega} start with a  Riemannian  submersion
$$
\pi :M \to B
$$
from a compact manifold  $M$. We then apply Lemma \ref{existence lemma omega} at \red{a} point $p\in B$ to get a function 
$\omega :B \to \R.$ Next we pull the 
pull $\omega$ back via $\pi$ to get a a  $\pi$--basic function
\begin{equation*}
 \tilde{\omega} :M\to \R, \quad \quad
\tilde{\omega}:= \omega \circ \pi.
\end{equation*}
To control the Hessians of $\tilde{\omega}$ we note that the compactness of $M$ gives us that there is a uniform upper bound  
$$
\norm{\sigma} \leq K
$$
for the norms of the second fundamental forms of the fibers of  $\pi$ and that 
$$
d\pi (\grad \tilde{\omega}) |_{x} =  \grad \omega|_{\pi(x)}   =  \phi'\left(\dist_p (\pi (x))\right)X,
$$
where $\phi: \R \to \R$ and $X: =  \grad \dist_p $ are as in the proof of Lemma \ref{existence lemma omega}. 
Exploiting the previous two displays, the Hessian formulas \eqref{Hessian on XX} and \eqref{Hess Y}, and replacing the quantity $\epsilon \eta^3$ in table \eqref{da table} with $\frac{\epsilon \eta^3}{K}$ gives us the following corollary of Lemma \ref{existence lemma omega}. 

\begin{cor}\label{existence lemma omega vertical}
Let $\pi :M\to B$ be a Riemannian submersion from a compact manifold $M$. Given any $p\in B$, $C<-1$, and $\epsilon ,\eta \in (0,\ \frac{1}{2}\min \{1,\ \inj_{M}\})$ and any $\tau \in (0,\frac{\epsilon \eta }{2|C|})$ there is a $\pi$-basic smooth function 
\begin{equation*}
\tilde{\omega} :M \to \R
\end{equation*}
so that 
\begin{enumerate}
\item $\grad \tilde{\omega}  |_{\pi ^{-1}(p)}=0$,

\item $\norm{\tilde{\omega}}_{C^{1}}<\epsilon$, 

\item $\supp(\tilde{\omega})\subset B(\pi ^{-1}(p),2\eta )$,
\end{enumerate}
{\color{red} Moreover, for all unit vectors $Z\in TM$,
\begin{equation}\label{gen Hess omega v}
3C\leq \hess_{\tilde{\omega}  }(Z,Z)\leq \epsilon,
\end{equation}
and for all horizontal unit vectors $Z\in TM|_{B\left( \pi ^{-1}(p),\tau \right)}$,
\begin{equation}\label{spec Hess omega v} 
\hess_{\tilde{\omega}}(Z,Z)\leq C.
\end{equation}}
\end{cor}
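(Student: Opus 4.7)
The plan is to invoke Lemma \ref{existence lemma omega} on the base $B$ at the point $p$, with the parameters suitably rescaled, and then pull the resulting function back to $M$ via $\pi$. The pre-corollary paragraph already indicates the choice: apply Lemma \ref{existence lemma omega} with the quantity ``$\epsilon$'' replaced by $\epsilon\eta^3/K$ (where $K$ is a uniform upper bound for $\|\sigma\|$), with ``$C$'' replaced by $2C$ (to leave room for error terms), and with $\eta,\tau$ unchanged. Let $\omega : B\to\R$ be the function produced, and set $\tilde\omega := \omega\circ\pi$.

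Properties (1)--(3) follow almost immediately from the corresponding properties of $\omega$ and the fact that $\tilde\omega$ is $\pi$-basic. Since $\pi$ is a Riemannian submersion, $\grad\tilde\omega$ is the horizontal lift of $\grad\omega$, so it vanishes on $\pi^{-1}(p)$ and has the same pointwise norm as $\grad\omega\circ\pi$, giving $\|\tilde\omega\|_{C^1}<\epsilon\eta^3/K\leq\epsilon$. The support inclusion $\supp(\tilde\omega)\subset B(\pi^{-1}(p),2\eta)$ follows because $\pi$ is distance-nonincreasing and the horizontal lifts of minimal geodesics from $p$ are minimal in $M$.

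The real work is (4) and (5). Decompose an arbitrary unit $Z\in TM$ as $Z=Z^\h+Z^\v$. Using that $\tilde\omega$ is basic, a direct computation yields
\begin{align*}
\hess_{\tilde\omega}(Z^\h,Z^\h) &= \hess_\omega(d\pi Z^\h,d\pi Z^\h)\circ\pi, \\
\hess_{\tilde\omega}(Z^\v,Z^\v) &= -\inn{\sigma(Z^\v,Z^\v),\,\grad\tilde\omega}, \\
\hess_{\tilde\omega}(Z^\h,Z^\v) &= -\inn{(\nabla_{Z^\h}Z^\v)^\h,\,\grad\tilde\omega},
\end{align*}
where in the last line the error is again controlled by $\|\grad\tilde\omega\|$ times a uniform constant (arising from $A$ and $\sigma$, both absorbed into $K$). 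Since $|d\pi Z^\h|_B=|Z^\h|_M\leq 1$, the horizontal-horizontal term lies in $[3(2C),\epsilon\eta^3/K]$ by \eqref{Hessian inequal -}. The remaining terms are each at most $K\cdot\|\grad\tilde\omega\|\leq K\cdot\frac{\epsilon\eta^3}{K}=\epsilon\eta^3$ in absolute value. Summing and using $\epsilon,\eta<\frac12$ and $|C|>1$, one obtains $3C\leq\hess_{\tilde\omega}(Z,Z)\leq\epsilon$, establishing \eqref{gen Hess omega v}.

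For \eqref{spec Hess omega v}, one restricts to $Z$ horizontal over $B(\pi^{-1}(p),\tau)$, so the mixed and vertical contributions vanish and only the horizontal-horizontal term survives; by \eqref{Hessian inequal -b} applied with constant $2C$, this term is at most $2C$, which is $<C$ since $C<-1$. The main (though mild) obstacle is bookkeeping: one must verify that the extra factor of $1/K$ inserted into the transformation table still satisfies the hypothesis $\tau\in(0,\epsilon\eta/(2|C|))$ of Lemma \ref{existence lemma omega} after the rescaling, and that the fudge factor between $2C$ and $3C$ genuinely dominates the error $\epsilon\eta^3$ for all admissible $\epsilon,\eta$; both are quickly checked from the stated ranges.
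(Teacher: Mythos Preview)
Your overall strategy---construct $\omega$ on $B$ via Lemma \ref{existence lemma omega}, pull back to $\tilde\omega=\omega\circ\pi$, and control the vertical and mixed Hessian contributions by $K\cdot\|\grad\tilde\omega\|$---is exactly the paper's. But your parameter choice contains a genuine error that breaks the lower bound in part (4).

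You replace the ``$C$'' of Lemma \ref{existence lemma omega} by $2C$. Since $C<-1$, this is \emph{more} negative, so the black-box conclusion \eqref{Hessian inequal -} puts the horizontal-horizontal term in $[6C,\epsilon']$, as you correctly record. After adding error terms of size $O(\epsilon\eta^3)$ you obtain $\hess_{\tilde\omega}(Z,Z)\geq 6C-O(\epsilon\eta^3)$. But $6C<3C$, so this cannot yield $\hess_{\tilde\omega}(Z,Z)\geq 3C$: the needed inequality $6C-O(\epsilon\eta^3)\geq 3C$ is equivalent to $3C\geq O(\epsilon\eta^3)$, and the left side is negative. Your closing remark about ``the fudge factor between $2C$ and $3C$'' indicates you expected a horizontal lower bound of $2C$, but Lemma \ref{existence lemma omega} applied with constant $2C$ delivers $3(2C)=6C$, not $2C$.

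The paper's argument keeps $C$ unchanged and instead reopens the \emph{proof} of Lemma \ref{existence lemma omega} rather than quoting its statement. That proof actually establishes $\hess_\omega\geq 2C-O(\epsilon\eta^3)$ on $B$ (the constant $3C$ in \eqref{Hessian inequal -} already absorbs a full unit of slack, coming from the $C\mapsto 2C$ entry of table \eqref{da table}). Pulling back and adding the vertical/mixed corrections of size $O(\epsilon\eta^3)$ then gives $\hess_{\tilde\omega}\geq 2C-O(\epsilon\eta^3)\geq 3C$, since $|C|>1$ dominates all errors. This is why the pre-corollary paragraph instructs you to re-use the Hessian formulas \eqref{Hessian on XX} and \eqref{Hess Y} directly and only to divide the $\epsilon\eta^3$ entry of table \eqref{da table} by $K$; it does not tell you to modify $C$.
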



\section{Definition of the Metric}\label{4_definition_of_the_metric}
For some $k \geq b$, let 
\begin{equation}\label{g_m}
    \pi:(M^n,g_M)\to(B^b,g_B)
\end{equation}
be a Riemannian submersion with $\ric_k^M \geq k$. 
For $p\in B$, $C_h > 1$, $C_v < -1$, and
\[\epsilon_h, \eta_h, \epsilon_v, \eta_v \in \left(0, \ \frac{1}{2} \min \{ 1, \ \inj_M  \}\right),\]
let 
\[\omega_h : B \to \R \; \; \text{and} \;\; \omega_v : B \to\R \]
be as in Lemma \ref{existence lemma omega} with $\epsilon_h, \eta_h$, and $C_h$ playing the role of $\epsilon, \eta$, and $C$ in the construction of $\omega_h$, and $\epsilon_v, \eta_v$, and $C_v$ playing the role of $\epsilon, \eta$, and $C$ in the construction of $\omega_v$.

Now define
\[\tilde{\omega}_h :M  \to\R \quad\text{and}\quad \tilde{\omega}_v : M \to\R\]
by
\begin{equation}\label{omega h omega v}
    \tilde{\omega}_h  :=  \omega_h \circ\pi \quad\text{and}\quad \tilde{\omega}_v := \omega_v  \circ\pi,
\end{equation}
and set
\begin{equation}\label{warped B metric defn}
   \tilde{g}_B := e^{2\omega_h}g_B
\end{equation}
and 
\begin{equation}\label{warped M metric defn}
   \tilde{g}_M := e^{2\tilde{\omega}_h}g_M|_{\cal H\oplus \cal H} + e^{2\tilde{\omega}_v}g_M|_{\cal V \oplus \cal V}.  
\end{equation}
It immediately follows that
\[
\pi:(M^n,\tilde{g}_M)\to(B^b,\tilde{g}_B)
\]
continues to be a Riemannian submersion. Our goal is to show that there is a choice of $\omega_h$ and $\omega_v$ so that 
\[\tric^{M}_{k} := \ric_k(M, \tilde{g}_M ) >0\]
and 
\[\tric^{B} := \ric(B, \tilde{g}_B )\]
obtains both positive and negative values. 

Theorem 2 of \cite{pro_wilhelm} says that if  $\pi : M \to B$ is a Riemannian submersion from a compact manifold $M$ with $\ric^{M}> 0$, then $B$ cannot have nonpositive Ricci curvature. Thus to prove that $\tric^{B}$ has both signs, we only need show that $\tric^{M}_{k}  >0 $ and that $\tric^{B}$ obtains a negative value. 

\section{Verification on the Base}\label{5_verification_on_the_base}
In this section, we estimate the impact on the curvature of the base of the conformal change
\[
 \tilde{g}_B = e^{2\omega_h}g_B,
\]
defined in (\ref{warped B metric defn}). We write $R^b$ and  $\ric^B$  for the curvature and Ricci tensors of $(B,g_B)$,  and we write $ \tric^{B}$ and $\tilde{R}_B$  for the curvature and Ricci tensors of $(B,\tilde{g}_B)$. We set 
\[
\mathfrak{D} R_B :=  \tilde{R}_B - R_B,
\]
and  we  choose $\omega_h$ so that $ \tilde{g}_B$ satisfies the conclusion of the following result.
\begin{prop}\label{neg Ricci at p}
Given any $\epsilon > 0$, $K>1$, and $\eta_h \in \left(0, \frac{1}{2} \min \{ 1,  \ \inj_M  \} \right)$, there is an $\epsilon_{h}^{0} >0$ so that if $\epsilon_{h}  \in (0,\epsilon_{h}^{0} ) $ and
\begin{equation}\label{choice of C_h}
    C_h > \tilde{K} := \frac{K}{2} + \max \left\{   \abs{\sec^B|_p} \right\} +1,
\end{equation}
then
\begin{equation}\label{Ricci at p}
  \tsec^B|_p < -K  
\end{equation}
and
\begin{equation}\label{Delta R at p}
    \norm{  \mathfrak{D} R_B - (-  2 \hess_{\omega_h} \circ g_B) } < \epsilon, 
\end{equation}
where $\circ$ is the Kulkarni--Nomizu product defined in \eqref{Kulkarni Nomizu}.
\end{prop}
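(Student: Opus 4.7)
The plan is to combine the standard conformal change formula for the curvature tensor with three properties of $\omega_h$ built into Lemma \ref{existence lemma omega}. Since $\omega_h = \phi\circ\dist_p$ with $\phi(0)=0$, we have $\omega_h(p)=0$; item (1) of the lemma gives $(\grad\omega_h)|_p=0$; and inequality \eqref{Hessian inequal +b} with constant $C_h$ gives $\hess_{\omega_h}(Z,Z)|_p \geq C_h$ for every unit $Z$, while \eqref{Hessian inequal +} provides the global upper bound $\hess_{\omega_h}(Z,Z) \leq 3C_h$ throughout, with $\|\omega_h\|_{C^1} < \epsilon_h$.

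The identity to start from is
\[
\tilde R_B = e^{2\omega_h}\bigl[R_B - 2(g_B\circ A)\bigr],
\qquad A := \hess_{\omega_h} - d\omega_h\otimes d\omega_h + \tfrac{1}{2}|\grad\omega_h|^2 g_B,
\]
where the coefficient $2$ absorbs the factor of $\tfrac{1}{2}$ in the paper's Kulkarni-Nomizu product (a quick verification on the round sphere pins down this constant). At $p$ the conformal factor is $1$ and $A|_p = \hess_{\omega_h}|_p$, so $\mathfrak{D}R_B|_p = -2(g_B\circ \hess_{\omega_h})|_p$. Expanding via \eqref{Kulkarni Nomizu} on any $g_B$-orthonormal pair $(X,Y)\in T_pB$ (which is also $\tilde g_B$-orthonormal at $p$, since $\omega_h(p)=0$) yields
\[
\tsec^B(X,Y)|_p = \sec^B(X,Y)|_p - \hess_{\omega_h}(X,X)|_p - \hess_{\omega_h}(Y,Y)|_p \leq \max|\sec^B|_p| - 2C_h,
\]
and the choice \eqref{choice of C_h} then gives \eqref{Ricci at p}.

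For \eqref{Delta R at p} I would rearrange so the claimed main term cancels, obtaining
\[
\mathfrak{D}R_B + 2(\hess_{\omega_h}\circ g_B) = (e^{2\omega_h}-1)R_B - 2(e^{2\omega_h}-1)(g_B \circ \hess_{\omega_h}) + 2e^{2\omega_h}\bigl(g_B \circ (d\omega_h\otimes d\omega_h - \tfrac{1}{2}|\grad\omega_h|^2 g_B)\bigr),
\]
and bound the three summands in $C^0$ by, respectively, $O(\epsilon_h \|R_B\|_{C^0})$, $O(\epsilon_h C_h)$, and $O(\epsilon_h^2)$, using $|\omega_h|,|\grad\omega_h| < \epsilon_h$ and $\|\hess_{\omega_h}\| \leq 3C_h$. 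Each term tends to zero with $\epsilon_h$, so a suitably small $\epsilon_h^0$ drives the total below $\epsilon$.

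The only real subtlety is the order of quantifiers. The ``mixed'' error $(e^{2\omega_h}-1)(g_B\circ\hess_{\omega_h})$ has size $\epsilon_h C_h$, which is not small unless $\epsilon_h$ is chosen \emph{after} $C_h$ is fixed. This matches the structure of the proposition: one is handed $K$, then sets $C_h$ via \eqref{choice of C_h}, and only afterward picks $\epsilon_h^0$ small enough relative to $K, C_h, \|R_B\|_{C^0}$, and $\eta_h$. Past that bookkeeping, the argument is a routine unpacking of the conformal change formula; no new geometric input is needed.
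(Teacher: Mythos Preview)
Your proof is correct and follows essentially the same route as the paper: invoke the conformal change formula for the curvature tensor, note that $(\grad\omega_h)|_p=0$ kills the first-order terms at $p$, use the Hessian lower bound \eqref{Hessian inequal +b} together with \eqref{choice of C_h} to force $\tsec^B|_p<-K$, and then bound the remaining terms by $C^1$-smallness of $\omega_h$ to obtain \eqref{Delta R at p}. Your treatment is in fact a bit more careful than the paper's, which simply writes the error as $O(\epsilon_h)$ without isolating the $O(\epsilon_h C_h)$ cross term; your remark about needing to fix $C_h$ before choosing $\epsilon_h^0$ is well taken and is exactly the reading the statement requires.
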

Notice, in particular, that \eqref{Ricci at p} implies that 
\begin{equation}\label{Ricci at p--2}
    \tric^{B} (\cdot, \cdot)|_p < -K (b-1) g(\cdot, \cdot)|_p.
\end{equation}
\begin{remark-nonumb}
For  a similar result on conformal changes supported on a tubular neighborhood of a compact submanifold, see 
Theorem 2.1 in \cite{searle_wilhelm}.
\end{remark-nonumb}
\begin{proof}
By  Parts 2 and 3 of Exercise 4.7.14 in  \cite{petersen_book}, 
\begin{eqnarray}\label{curv op conf}
e^{-2\omega_h}  \tilde{R}_B &=& R_B - \left(2 \hess^{B}_{\omega_h} -2 (\odif{\omega_h}\otimes\odif{\omega_h}) + |d \omega_h|^2 g_B\right) \circ g_B, 
\end{eqnarray}
and for $g_B$-orthonormal $X,Y\in TB$, 
\begin{eqnarray}\label{sec conf}
e^{2\omega_h}  \tsec_B(X,Y)   &=&   \sec_B(X,Y)   - \hess^{B}_{\omega_h} (X,X) - \hess^{B}_{\omega_h} (Y,Y) \nonumber \\
                             &+& (D_X \omega_h)^2 + (D_Y \omega_h)^2 - |d \omega_h|^2.
\end{eqnarray}
However, by Part 1 of Lemma \ref{existence lemma omega}, $\nabla \omega_h |_p =0$. Thus at $p$, 
\begin{equation}\label{curv at p}
e^{2\omega_h}  \tsec_B(X,Y)|_p   =   \sec_B(X,Y)|_p   - \hess^{B}_{\omega_h} (X,X)|_p - \hess^{B}_{\omega_h} (Y,Y)|_p.
\end{equation}
Noting that $C_h > \tilde{K}$, Inequality (\ref{Hessian inequal +b}) gives us that for all unit vectors $Z\in T_pM$,
\[
\hess^{B}_{\omega_h}(Z, Z)|_p > \tilde{K}, 
\]
which combined with \eqref{choice of C_h}  and \eqref{curv at p} gives us \eqref{Ricci at p}. 

To prove \eqref{Delta R at p}, we note that since $\norm{ \omega_h }_{C^1} < \epsilon_h$, \eqref{curv op conf} simplifies to
\begin{equation}
\norm{  \frak{D} R_B - (-  2  \hess_{\omega_h} \circ g_B) } <   O(\epsilon_h ) < \epsilon,
\end{equation}
provided $\epsilon_{h}^{0}$ is sufficiently small. 
\end{proof}


\section{Method to Verify Positive Intermediate Ricci Curvature}\label{6_methods_to_verify_PIRC}
In this section we prove a lemma  to estimate changes in intermediate Ricci 
curvature caused by changes in the curvature operator. In large part, our result relies on a result of Reiser and Wraith from \cite{reiser_wraith}.
Before stating either result we introduce some notation.

Let $g$ and $\tilde{g}$ be two Riemannian metrics on a smooth $n$-manifold $M$ with  sectional curvatures $\sec^M $  and $\tsec^M$.
To understand the change in the intermediate Ricci curvature lower bound when we transform $(M, g_M)$ to $(M, \tilde{g}_M)$,  we will say that 
    \[
    \frak{D} \ric^M_k \geq r,
    \]
provided for all $g_M$-orthonormal sets $ \{u, v_1,\cdots,v_k\} $  we have
    \[
    \sum_{i=1}^k \tsec^M (u, v_i) -\sec^M (u, v_i)  > r. 
    \]

In Section \ref{7_analysis_on_the_total_space}, we will complete the proof of Theorem \ref{trying too hard} by employing the following lemma to verify that there is a choice of $\omega_h$ and $\omega_v$ so that the metric
\begin{equation*}
   \tilde{g}_M := e^{2\tilde{\omega}_h}g_M|_{\cal H\oplus \cal H} + e^{2\tilde{\omega}_v}g_M|_{\cal V \oplus \cal V}
\end{equation*}
has positive $\ric_k(M, \tilde{g}_M)$, while the induced metric 
\begin{equation*}
   \tilde{g}_B = e^{2\omega_h}g_B,  
\end{equation*}
on  $B$ has Ricci curvature of both signs. 
\begin{lemma}
\label{verification cor}
Given $\epsilon>0$ and $b\in\N$, there is a  $\delta>0$ with the following property.
Let $g$ and $\tilde{g}$ be two complete Riemannian metrics on a smooth $n$-manifold $M$ with curvature operators $\cal R_M$ and $\widetilde{\cal R}_M$, respectively. Let $\cal H$ and $\cal V$ be $g$-orthogonal distributions so that
\[ 
TM = \cal H  \oplus \cal V 
\]
with $\dim \cal H = b$. Set 
\[
\frak{D}\cal{R}_M := \widetilde{\cal R}_M - \cal R_M.
\]
Suppose that for  all $g$-orthonormal pairs  $\{X, Y\}\subset \cal H$ and $\{V, W\} \subset \cal V$, 
\begin{eqnarray}
\label{abstract condition for closeness}
     \norm{\frak{D}\cal R \cal(V\wedge W) } &<&\delta \label{abstract condition for closeness--2}\\
     \abs{ \frak{D}\cal{R}_M\left( X\wedge Y\right)^{( \cal H \wedge \cal V ) \oplus ( \cal V \wedge \cal V)}}  &<&\delta,  \label{XY alm eigen} \;\; \text{and} \\
\abs{  \frak{D}\cal{R}_M \left( X\wedge V\right)^{( \cal H \wedge \cal H ) \oplus ( \cal V \wedge \cal V)}}  &<&\delta, 
\end{eqnarray}
where the superscripts $^{( \cal H \wedge \cal V ) \oplus ( \cal V \wedge \cal V)}$ and $^{( \cal H \wedge \cal H ) \oplus ( \cal V \wedge \cal V)}$ denote the components of the vectors in $( \cal H \wedge \cal V ) \oplus ( \cal V \wedge \cal V)$ and $( \cal H \wedge \cal H ) \oplus ( \cal V \wedge \cal V)$, respectively. 

Suppose further that for some $\lambda_{h,h},\lambda_{h,v}\in\R$,
\begin{eqnarray}
 g(\frak{D}\cal{R}_M\left( X\wedge Y\right) ,X\wedge Y) &\geq &\lambda
_{h,h}, \\
g(\frak{D}\cal{R}_M\left( X\wedge V\right) ,X\wedge V) &\geq &\lambda
_{h,v},  \label{just better than--XV}    
\end{eqnarray}
and either 
\begin{equation}\label{RW Sum condition}
    \lambda_{h,v}   >0\quad\text{and}\quad  (b-1)\lambda_{h,h} + \lambda_{h,v}> 0
    \end{equation}
    or 
\begin{equation}\label{all small}
  \abs{\lambda_{h,h}}  <\delta \quad\text{and}\quad \abs{\lambda_{h,v}} <\delta.
\end{equation} 
Then  for any  $k\in \{b,b+1,\ldots, n-1\}$,
\[
\frak{D} \ric^M_k >  - \epsilon.
\]
\end{lemma}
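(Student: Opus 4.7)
The plan is to fix an arbitrary $g$-orthonormal frame $\{u, v_1, \ldots, v_k\}$ with $k \geq b$ and to bound the quantity
\[
S := \sum_{i=1}^k \bigl[\tsec^M(u, v_i) - \sec^M(u, v_i)\bigr]
\]
from below by $-\epsilon$, noting that to leading order $S$ coincides with $\sum_i g\bigl(\frak{D}\cal R_M(u\wedge v_i), u\wedge v_i\bigr)$. The first step is to decompose $u = u^{\h} + u^{\v}$ and $v_i = v_i^{\h} + v_i^{\v}$, producing
\[
u \wedge v_i = \alpha_i + \beta_i + \gamma_i, \qquad \alpha_i \in \cal H\wedge\cal H,\ \beta_i \in \cal H\wedge\cal V,\ \gamma_i \in \cal V\wedge\cal V,
\]
and to expand each summand of $S$ bilinearly with respect to this splitting.

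Hypotheses \ref{abstract condition for closeness--2}--\ref{just better than--XV} say precisely that $\frak{D}\cal R_M$ is, to within error $\delta$, block-diagonal relative to the tripartite splitting of $\Lambda^2 TM$, and that its diagonal block on $\cal V \wedge \cal V$ has norm at most $\delta$. Self-adjointness of $\frak{D}\cal R_M$ then forces every cross-term $g(\frak{D}\cal R_M\,*_i, \star_i)$ with $*\neq\star\in\{\alpha,\beta,\gamma\}$, together with the $\gamma_i$-diagonal contribution, to be $O(\delta)$. A simple scaling argument extends the orthonormal-pair lower bounds of the hypotheses to lower bounds on the two remaining diagonal quadratic forms, yielding
\[
g\bigl(\frak{D}\cal R_M(u \wedge v_i), u \wedge v_i\bigr) \geq \lambda_{h,h}\|\alpha_i\|^2 + \lambda_{h,v}\|\beta_i\|^2 - O(\delta).
\]

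The problem now reduces to verifying $\sum_i [\lambda_{h,h}\|\alpha_i\|^2 + \lambda_{h,v}\|\beta_i\|^2] \geq -O(\delta)$. Under hypothesis \ref{all small} this is immediate, since $|\lambda_{h,h}|, |\lambda_{h,v}| < \delta$ and $\|\alpha_i\|, \|\beta_i\| \leq 1$. Under hypothesis \ref{RW Sum condition} I would invoke the lemma of Reiser--Wraith from \cite{reiser_wraith}: the $g$-orthonormality of $\{u, v_1, \ldots, v_k\}$ combined with $\dim \cal H = b$ yields the constraint $|u^{\h}|^2 + \sum_i |v_i^{\h}|^2 \leq b$, and the extremal configuration, in which $u$ together with $b-1$ of the $v_i$'s is horizontal while the remaining $k - b + 1 \geq 1$ vectors $v_i$ are vertical, produces exactly the combination $(b-1)\lambda_{h,h} + \lambda_{h,v}$, which is positive by hypothesis. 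Choosing $\delta < \epsilon/(Ck)$ for a constant $C = C(n,b)$ absorbs all accumulated $O(\delta)$ errors and concludes the proof.

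The step I expect to be the main obstacle is the combinatorial/algebraic bookkeeping in the previous paragraph: one must verify that among all admissible distributions of horizontal and vertical content across $k+1$ orthonormal vectors, the configuration that minimizes $\sum_i [\lambda_{h,h}\|\alpha_i\|^2 + \lambda_{h,v}\|\beta_i\|^2]$ is precisely the extremal one that is ruled out by \ref{RW Sum condition}. This combinatorial content is where the Reiser--Wraith lemma does real work; everything else in the argument is bilinear algebra with careful tracking of $\delta$-sized corrections.
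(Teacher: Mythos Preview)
Your proposal is correct and follows essentially the same route as the paper. The paper packages your block-diagonal reduction into an auxiliary model operator $\mathcal{A}$ with eigenvalues $\lambda_{h,h}$, $\lambda_{h,v}$, and $0$ on $\mathcal{H}\wedge\mathcal{H}$, $\mathcal{H}\wedge\mathcal{V}$, $\mathcal{V}\wedge\mathcal{V}$, shows $\mathfrak{D}\mathcal{R}_M \geq \mathcal{A} - \tfrac{\epsilon}{2}$ in the $\mathrm{Ric}_k$ sense, and then applies the Reiser--Wraith lemma (via a short corollary) to $\mathcal{A}$ under \eqref{RW Sum condition}, exactly as you outline; the \eqref{all small} case is handled by the trivial observation that a self-adjoint operator with all eigenvalues $<\delta$ has norm $<\delta$.
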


The proof involves several preliminary results, one of which is the  following linear algebra lemma of Reiser and Wraith from \cite{reiser_wraith}.

\begin{lemma}[Reiser--Wraith, \cite{reiser_wraith}]\label{reiser_wraith} Let $V$ be a finite dimensional real vector space with a fixed inner product  $\langle \cdot, \cdot \rangle$.
    Let $A: \Lambda^2 V \rightarrow \Lambda^2 V$ be a self-adjoint linear map and let $k\in\{1,\cdots,\dim(V)-1\}$. Suppose that $V$ splits orthogonally as
    $V=V_1 \oplus V_2 \oplus V_3$, so that the spaces
    \[ V_i \wedge V_j \]
    with $i, j \in\{1,2,3\}$ are eigenspaces of $A$ with eigenvalues $\lambda_{i j}$. If for every $i \in\{1,2,3\}$ and every $n_{i1},n_{i2},n_{i3} \in \{ 0,1,2,3, \ldots, \dim(V) \}$ with $n_{ij} \leq \operatorname{dim}\left(V_j\right)(j \neq i)$, $n_{ii} \leq \operatorname{dim}\left(V_i\right)-1$, and 
    \begin{equation}\label{RW Sum}
        n_{i1}+n_{i2}+n_{i3}=k
    \end{equation}
    we have
    \begin{equation}\label{reiser_wraith_eigenvalues}
       n_{i1}\lambda_{i 1} + n_{i2}\lambda_{i 2} + n_{i3}\lambda_{i 3} > c, 
    \end{equation}
    for some $c\in\R$, then for every orthonormal set $ \{u_0, \cdots, u_k\} $ in $ V $,
    \begin{equation*}
       \sum_{i=1}^k \inn{A(u_0 \wedge u_i), u_0 \wedge u_i} > c.
    \end{equation*}
\end{lemma}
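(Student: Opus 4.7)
The plan is to fix an arbitrary orthonormal tuple $(u_0, u_1, \ldots, u_k)$ in $V$ and reduce the lower bound on $\sum_{i=1}^k \inn{A(u_0 \wedge u_i), u_0 \wedge u_i}$ to the hypothesized inequalities on adapted configurations. Using the eigenspace decomposition $A = \sum_{s \leq t} \lambda_{st} P_{V_s \wedge V_t}$, where $P_{V_s \wedge V_t}$ denotes orthogonal projection onto the eigenspace $V_s \wedge V_t \subset \Lambda^2 V$, I will rewrite the target sum as $\sum_{s \leq t} \lambda_{st} T_{st}$, where
$$T_{st} := \sum_{i=1}^k \| P_{V_s \wedge V_t}(u_0 \wedge u_i) \|^2.$$
Since the projections $P_{V_s \wedge V_t}$ resolve the identity on $\Lambda^2 V$ and $\| u_0 \wedge u_i \| = 1$ for each $i$, the coefficients $T_{st}$ are nonnegative and satisfy $\sum_{s \leq t} T_{st} = k$.

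The crux of the argument is to show that the vector $T = (T_{st})$ lies in the convex hull of the ``adapted vertices'' $n^{(i,\sigma)}$ coming from configurations in which $u_0 \in V_i$ and each $u_j$ (for $j \geq 1$) lies in a single summand $V_{s_j}$ of $V$. For such a configuration, $u_0 \wedge u_j \in V_i \wedge V_{s_j}$, so $T$ is supported on the indices $\{(s,t) : i \in \{s,t\}\}$, and $(T_{i1}, T_{i2}, T_{i3}) = (n_{i1}, n_{i2}, n_{i3})$ where $n_{it}$ counts the number of $u_j$'s in $V_t$; these satisfy $n_{ii} \leq \dim(V_i) - 1$ (since $u_0$ already occupies one dimension of $V_i$), $n_{ij} \leq \dim(V_j)$ for $j \neq i$, and $n_{i1} + n_{i2} + n_{i3} = k$ --- the exact constraints of the hypothesis.

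To establish the convex-hull containment, I pick an orthonormal basis $\{E_\alpha\}$ of $V$ refining the decomposition $V = V_1 \oplus V_2 \oplus V_3$, form the $(k+1) \times \dim(V)$ matrix $U = (U_{j\alpha})$ of coordinates of the $u_j$'s, and expand
$$T_{st} = \sum_{\substack{\alpha < \beta \\ \{s(\alpha), s(\beta)\} = \{s, t\}}} \sum_{i=1}^k (U_{0\alpha} U_{i\beta} - U_{0\beta} U_{i\alpha})^2.$$
The orthonormality relation $UU^T = I_{k+1}$, together with the trace bound $\sum_{j=0}^k \|\pi_{V_s} u_j\|^2 \leq \min(k+1, \dim V_s)$ coming from $\operatorname{tr}(P_1 P_2) \leq \min(\operatorname{rank} P_1, \operatorname{rank} P_2)$ for orthogonal projections, will yield linear inequalities on $T$ that match the defining inequalities of the polytope of adapted vertices. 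Writing $T = \sum_\ell w_\ell\, n^{(\ell)}$ with $w_\ell \geq 0$, $\sum_\ell w_\ell = 1$, and each $n^{(\ell)}$ an adapted vertex (with fractional ones further expanded as convex combinations of integer adapted vertices, preserving the strict inequality by linearity on the polytope), the hypothesis gives $\sum_{s \leq t} \lambda_{st} n^{(\ell)}_{st} > c$ for every $\ell$, so
$$\sum_{s \leq t} \lambda_{st} T_{st} = \sum_\ell w_\ell \sum_{s \leq t} \lambda_{st} n^{(\ell)}_{st} > c,$$
as desired.

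The main obstacle is the convex-hull step: verifying that the inequalities on $T$ derived from orthonormality are in fact the defining inequalities of the polytope of adapted vertices. The cleanest route is probably to identify the extreme points of the set of admissible $T$-vectors and show that they are (possibly limits of) adapted ones, after which Carath\'eodory's theorem produces the convex representation; alternatively, one can attempt an explicit combinatorial decomposition by induction on some measure of ``non-adaptedness'' of $(u_0, \ldots, u_k)$.
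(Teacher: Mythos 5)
This lemma is quoted from Reiser--Wraith and the paper offers no proof of it, so there is nothing internal to compare your argument with; I can only assess the proposal on its own terms, and as it stands it has a genuine gap. Your reduction is fine: writing the target sum as $\sum_{s\le t}\lambda_{st}T_{st}$ with $T_{st}=\sum_{i=1}^{k}\|P_{V_s\wedge V_t}(u_0\wedge u_i)\|^2$, noting $T_{st}\ge 0$ and $\sum_{s\le t}T_{st}=k$, and observing that adapted configurations realize exactly the integer vectors $n$ of the hypothesis is all correct, and the strict inequality does pass through a convex combination. But the entire content of the lemma is then concentrated in the assertion that $T$ lies in the convex hull of the adapted vertices, and you do not prove this; you name it as ``the main obstacle'' and gesture at two possible routes without carrying either out. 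A proof plan whose only unproved step is the heart of the matter is not a proof.

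Moreover, the specific mechanism you propose for closing that step is not adequate as described. The constraints you extract from orthonormality --- $UU^{T}=I_{k+1}$ and the trace bounds $\sum_{j}\|\pi_{V_s}u_j\|^2\le\min(k+1,\dim V_s)$ --- are linear in the quantities $\|\pi_{V_s}u_j\|^2$, whereas the $T_{st}$ are quadratic expressions in the wedge products, and the facets of the adapted polytope are not consequences of those trace bounds alone. For example, with $\dim V_1=2$, $\dim V_2=\dim V_3=1$ and $k=2$, every adapted vertex satisfies $T_{11}+T_{23}\le 1$, and verifying that every orthonormal triple satisfies this requires a separate argument (e.g., writing $T_{11}+T_{23}=\sum_{j}\bigl(\omega(u_0,u_j)^2+\eta(u_0,u_j)^2\bigr)$ for the $2$-forms $\omega=e_1\wedge e_2$, $\eta=e_3\wedge e_4$ and bounding each sum by $\|\pi_{V_1}u_0\|^2$ and $\|\pi_{V_2\oplus V_3}u_0\|^2$ respectively); nothing in the trace bounds produces it. Even granting a full list of valid linear inequalities on $T$, you would still have to prove that the polytope they cut out has only adapted vertices (an integrality statement), which is itself a nontrivial combinatorial claim. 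To make this route work you must identify the facet inequalities of the adapted polytope explicitly, prove each one for arbitrary orthonormal frames, and prove the vertex description --- none of which is done here.
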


The following lemma is an easy consequence of Lemma \ref{reiser_wraith}.
\begin{lemma}\label{reiser_wraith_contextual}
 Let $A: \Lambda^2 V \rightarrow \Lambda^2 V$ be as in Lemma \ref{reiser_wraith}, with $V_3 = \{ 0\}$, and $\dim(V_1) =b$. 
 Given $\epsilon>0$, there is a $\delta>0$ with the following property: Suppose that 
 \begin{equation}\label{reiser_wraith_contextual eqn} 
    \lambda_{12} > 0, \; \; \;    (b-1) \lambda_{11}+ \lambda_{12} > 0 , \; \; \; \text{and} \; \;  | \lambda_{22} | < \delta.
    \end{equation}
Then for every  $k \geq b$ and every orthonormal set $ \{u_0, u_1 \cdots, u_k\} $ in $ V $,
    \begin{equation}\label{reiser_wraith_inner_product}
       \sum_{i=1}^k \inn{A(u_0 \wedge u_i), u_0 \wedge u_i} > -\epsilon.
    \end{equation}
\end{lemma}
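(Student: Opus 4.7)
The plan is to reduce directly to the Reiser--Wraith Lemma \ref{reiser_wraith} with $c = -\epsilon$, so that the entire proof consists of verifying the eigenvalue hypothesis \eqref{reiser_wraith_eigenvalues} of that lemma for this choice of $c$. Since $V_3 = \{0\}$, only $i \in \{1, 2\}$ produce admissible triples $(n_{i1}, n_{i2}, n_{i3})$, and $n_{i3} = 0$ throughout. I would set $\delta := \epsilon / \dim V$ (any smaller positive quantity would do); with this choice in hand, the two cases reduce to arithmetic.

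Case $i = 1$: the constraints are $n_{11} \le b-1$, $n_{12} \le \dim V_2$, and $n_{11} + n_{12} = k$. The hypothesis $k \ge b$ combined with $n_{11} \le b-1$ forces $n_{12} \ge 1$, so the single positive eigenvalue $\lambda_{12}$ is guaranteed to appear at least once in the sum. If $\lambda_{11} \ge 0$ the eigenvalue sum is manifestly positive. If $\lambda_{11} < 0$, then $n_{11}\lambda_{11} \ge (b-1)\lambda_{11}$ and $n_{12}\lambda_{12} \ge \lambda_{12}$, so the sum is bounded below by $(b-1)\lambda_{11} + \lambda_{12}$, which is strictly positive by \eqref{reiser_wraith_contextual eqn}. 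In either subcase the eigenvalue sum strictly exceeds $0$, hence exceeds $-\epsilon$.

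Case $i = 2$: the eigenvalue sum is $n_{21}\lambda_{12} + n_{22}\lambda_{22}$, where $n_{21} \le b$ and $n_{22} \le \dim V_2 - 1$. The first term is nonnegative since $\lambda_{12} > 0$, while the second satisfies
$$
|n_{22}\lambda_{22}| \;\le\; k|\lambda_{22}| \;<\; k\delta \;\le\; (\dim V - 1)\,\tfrac{\epsilon}{\dim V} \;<\; \epsilon
$$
by our choice of $\delta$ and the bound $k \le \dim V - 1$ forced by the existence of an orthonormal set $\{u_0,\dots,u_k\}$. Hence this eigenvalue sum is also strictly greater than $-\epsilon$.

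With both cases verified, Lemma \ref{reiser_wraith} delivers the desired inequality \eqref{reiser_wraith_inner_product}. No step is genuinely obstructive: the single substantive observation is that $k \ge b$ is precisely what forces $n_{12} \ge 1$ in Case 1, so that the positive eigenvalue $\lambda_{12}$ always appears and combines with the hypothesis $(b-1)\lambda_{11} + \lambda_{12} > 0$ to dominate any negative contribution from $\lambda_{11}$; the smallness of $|\lambda_{22}|$ then handles Case 2 by a direct estimate.
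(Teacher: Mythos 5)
Your proposal is correct and follows essentially the same route as the paper: reduce to Lemma \ref{reiser_wraith} with $c=-\epsilon$, observe that $k\ge b$ and $n_{11}\le b-1$ force $n_{12}\ge 1$ so that the hypothesis $(b-1)\lambda_{11}+\lambda_{12}>0$ controls the $i=1$ case, and use $\delta\le\epsilon/\dim V$ together with $\lambda_{12}=\lambda_{21}>0$ to control the $i=2$ case. The only (immaterial) difference is where the boundary subcase $\lambda_{11}=0$ is absorbed, and your use of $\ge$ rather than the paper's strict $>$ in $n_{21}\lambda_{21}+n_{22}\lambda_{22}\ge n_{22}\lambda_{22}$ is actually slightly more careful, since $n_{21}=0$ is not excluded.
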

\begin{proof}
    Let $n_{ij}$ be as in Lemma \ref{reiser_wraith}. Since $V_{3}=\{0\},$ $n_{13}=n_{23}=n_{33}=0$ and we only need consider the cases when $i=\{1,2\}.$
When $i=1,$ (\ref{RW Sum}) becomes 
\begin{equation*}
n_{11}+n_{12} = k.
\end{equation*}
Since $k\geq b$ and $n_{11}\leq b-1$, it follows that $n_{12}\geq 1.$

If $\lambda _{11}>0,$ then since $\lambda _{12}>0$, 
\begin{equation}\label{trivial RW}
n_{11}\lambda _{11}+n_{12}\lambda _{12}>0.
\end{equation}
Hence, we may assume that $\lambda _{11}\leq 0$.
This together with $\lambda _{12}>0$, $n_{12}\geq 1$,  and $n_{11}\leq b-1$ gives us    
\begin{equation}\label{easy RW}
n_{11}\lambda _{11}+n_{12}\lambda _{12}\geq\left( b-1\right) \lambda
_{11}+\lambda _{12}>0,
\end{equation}
by \eqref{reiser_wraith_contextual eqn}.

When $i=2,$ since $\lambda _{21}>0$ we have 
\begin{equation}\label{pretty easy RW}
n_{21}\lambda _{21}+n_{22}\lambda _{22}>n_{22}\lambda _{22}>-|\delta
n_{22}|>-\epsilon ,
\end{equation}
provided we choose $\delta \in \left( 0,\frac{\epsilon }{\dim(V) }\right)$. Together (\ref{trivial RW}), (\ref{easy RW}), and (\ref{pretty easy RW}) give us that (\ref{reiser_wraith_eigenvalues}) holds in all cases with $c=-\epsilon $, hence (\ref{reiser_wraith_inner_product}) holds, as desired.
\end{proof}
In addition to Lemma \ref{reiser_wraith_contextual}, the proof of Lemma \ref{verification cor} uses the following observation, which is a special case of the fact that the norm of a self-adjoint map is small provided all of its eigenvalues are small. 
\begin{lemma}\label{reiser_wraith_contextual 2}
    Let $A: \Lambda^2 V \rightarrow \Lambda^2 V$ be as in Lemma \ref{reiser_wraith}, with $V_3 = \{ 0\}$ and $\dim (V_1) =b$. 
 If 
 \begin{equation}\label{reiser_wraith_contextual_2 eqn} 
      | \lambda_{11}| < \delta, \; \;   |\lambda_{12} |< \delta, \; \; \text{and} \; \;  | \lambda_{22} | < \delta,
    \end{equation}
then
\[\norm{A}  \leq \delta.\]
\end{lemma}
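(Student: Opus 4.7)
The plan is to exploit the fact that the three eigenspaces appearing in the hypothesis span all of $\Lambda^2 V$ and are pairwise orthogonal, so the operator norm of $A$ is just the largest absolute value of its eigenvalues.

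First I would observe that since $V_3 = \{0\}$, we have $V = V_1 \oplus V_2$ as an orthogonal direct sum, and this induces the orthogonal decomposition
\begin{equation*}
\Lambda^2 V = \Lambda^2 V_1 \;\oplus\; (V_1 \wedge V_2) \;\oplus\; \Lambda^2 V_2.
\end{equation*}
By the hypotheses of Lemma \ref{reiser_wraith} (inherited through the statement of Lemma \ref{reiser_wraith_contextual 2}), these three summands are precisely the eigenspaces of $A$ with eigenvalues $\lambda_{11}$, $\lambda_{12}$, and $\lambda_{22}$, respectively.

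Next, given any $\omega \in \Lambda^2 V$, I would write
\begin{equation*}
\omega = \omega_{11} + \omega_{12} + \omega_{22},
\end{equation*}
according to the orthogonal decomposition above. Because $A$ acts by scalar multiplication on each summand, and the summands are mutually orthogonal, the Pythagorean theorem gives
\begin{equation*}
\norm{A\omega}^2 = \lambda_{11}^2 \norm{\omega_{11}}^2 + \lambda_{12}^2 \norm{\omega_{12}}^2 + \lambda_{22}^2 \norm{\omega_{22}}^2 \;\leq\; \delta^2 \left( \norm{\omega_{11}}^2 + \norm{\omega_{12}}^2 + \norm{\omega_{22}}^2 \right) = \delta^2 \norm{\omega}^2,
\end{equation*}
where the inequality uses \eqref{reiser_wraith_contextual_2 eqn}. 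Taking square roots and then the supremum over unit vectors $\omega$ yields $\norm{A} \leq \delta$, as desired. There is no real obstacle here; the statement is essentially the elementary fact that a self-adjoint operator on a finite-dimensional inner product space has operator norm equal to its largest eigenvalue in absolute value, and the role of the hypothesis is merely to certify that the listed $\lambda_{ij}$ exhaust the spectrum of $A$.
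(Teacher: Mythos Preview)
Your proof is correct and follows essentially the same approach as the paper: decompose an arbitrary element of $\Lambda^2 V$ into its three orthogonal eigenspace components, apply $A$, and use the Pythagorean theorem together with the eigenvalue bounds to conclude $\norm{A\omega}^2 \leq \delta^2 \norm{\omega}^2$. The paper phrases the decomposition slightly differently (writing a unit vector as $\mu_{11}e_{11}+\mu_{12}e_{12}+\mu_{22}e_{22}$ with scalar coefficients and unit eigenvectors), but the argument is identical in substance.
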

\begin{proof}
   A unit vector $\mathfrak{u}\in $ $\wedge ^{2}V$ has the form
\begin{equation*}
\mathfrak{u}=\mu _{11}{\color{red}\mathfrak{e}}_{11}+\mu _{12}{\color{red}\mathfrak{e}}_{12}+\mu _{22}{\color{red}\mathfrak{e}}_{22},
\end{equation*}
where $(\mu _{11})^{2}+(\mu _{12})^{2}+(\mu_{22})^{2}=1$ and ${\color{red}\mathfrak{e}}_{11}\in V_{1}\wedge V_{1},$ ${\color{red}\mathfrak{e}}_{12}\in V_{1}\wedge V_{2},$ ${\color{red}\mathfrak{e}}_{22}\in V_{2}\wedge V_{2}$ are unit vectors. 
Since $V_{1}\wedge V_{1}$, $V_{1}\wedge V_{2}$, and $V_{2}\wedge V_{2}$ are orthogonal, 
\begin{eqnarray*}
\abs{\inn{A(\mathfrak{u}), A(\mathfrak{u})}} &=& 
  \norm { \mu_{11}\lambda _{11}{\color{red}\mathfrak{e}}_{11} + \mu _{12}\lambda _{12}{\color{red}\mathfrak{e}}_{12} + \mu_{22}\lambda_{22}{\color{red}\mathfrak{e}}_{22} }^2 \\
& =& \lambda _{11}^2 \mu _{11}^{2}+\lambda _{12}^2\mu _{12}^{2}+\lambda _{22}^2\mu
_{22}^{2}  \\
&\leq& \delta^2 \left( \mu _{11}^{2}+\mu _{12}^{2}+\mu _{22}^{2}\right)  \\
&=&\delta^2,
\end{eqnarray*}
as claimed.
\end{proof}

\begin{proof}[Proof of Lemma \ref{verification cor}]
Let $\lambda _{h,h}, \lambda _{h,v} $ be as in Lemma \ref{verification cor}, and define
\begin{eqnarray*}
 {\cal A} :  TM \wedge TM &\longrightarrow&   TM \wedge TM \; \; \; \text{by setting}\\
\mathcal{A}(X\wedge Y) &=&\lambda _{h,h}\left( X\wedge Y\right), \; \; \text{for all $X,Y \in \cal{H}$} \\
\mathcal{A}(X\wedge V) &=&\lambda _{h,v}\left( X\wedge V\right), \; \; \text{for all $X\in \cal{H}, Y\in \cal{V}$, and } \\
\mathcal{A}(V\wedge W) &=&0 ,    \; \; \text{for all $V,W \in \cal{V}$.}
\end{eqnarray*}

Informally, Inequalities (\ref{abstract condition for closeness})--\eqref{just better than--XV} mean that  $\frak{D}\cal{R}_M$ is almost $\geq\mathcal{A}$. More precisely, 
given any $\epsilon>0$, there is a $\delta>0$ so that if  (\ref{abstract condition for closeness})--\eqref{just better than--XV} hold for our choice of $\delta>0$, then 
for all $p\in M$ and all $g$-orthonormal $(k+1)$-frames $\{ u, v_1, \ldots, v_k \} \subset T_pM$, 
\begin{equation}\label{model is smaller}
\sum_{i=1}^k g(\frak{D}\cal{R}_M (u\wedge v_i), u\wedge v_i ) > \sum_{i=1}^kg( \cal{A} (u\wedge v_i), u\wedge v_i) - \frac{\epsilon}{2}.
\end{equation}

Thus if (\ref{RW Sum condition}) holds, then by combining (\ref{model is smaller}) with Lemma \ref{reiser_wraith_contextual} and possibly choosing an even smaller $\delta>0$ we get that 
\begin{eqnarray}\label{da end}\nonumber
   \sum_{i=1}^k g(\frak{D}\cal{R}_M (u\wedge v_i), u\wedge v_i ) 
   &\geq& \sum_{i=1}^kg( \cal{A} (u\wedge v_i), u\wedge v_i) - \frac{\epsilon}{2} \\
   &\geq& - \frac{\epsilon}{2} - \frac{\epsilon}{2},
\end{eqnarray}
as desired. 
Similarly, if (\ref{all small}) holds, then we combine (\ref{model is smaller}) with Lemma \ref{reiser_wraith_contextual 2} and also get that (\ref{da end}) holds, after possibly making $\delta$ even smaller.
\end{proof}


\section{Analysis on the Total Space}\label{7_analysis_on_the_total_space}
In this section we complete the proof of Theorem \ref{main-theorem} by showing that there is a choice of functions $\tilde{\omega}_h$ and $\tilde{\omega}_v$, defined using Lemma \ref{existence lemma omega}, so that the resulting metric 
\begin{equation*}
   \tilde{g}_M := e^{2\tilde{\omega}_h}g_M|_{\cal H\oplus \cal H} + e^{2\tilde{\omega}_v}g_M|_{\cal V \oplus \cal V}
\end{equation*}
has positive intermediate Ricci curvature, while $\tric^{B}$ obtains both signs. In fact, we will verify Inequality \eqref{almost the same} and hence also complete the proof of Theorem \ref{trying too hard}. 

We will compare the curvatures of $\tilde{g}_M$ with those of  $g_M$ by comparing both to those of the following auxiliary metric:
\begin{eqnarray}\label{hat g}
\hat{g}_M &= & \;\; \pi^*(\tilde{g}_B)|_{\cal H \oplus \cal H} \;\, +  \;g_M|_{\cal V \oplus \cal V} \nonumber \\
          & =&  e^{2\tilde{\omega}_h} g_M |_{\cal H \oplus \cal H} \;\;\; +  \;  g_M|_{\cal V \oplus \cal V}.
\end{eqnarray}
Note that \red{the}  submersion $\pi :M\to B$ is Riemannian with respect to both metrics $\hat{g}_M$ and $\tilde{g}_M$, and that in either case, the induced metric on $B$ is $\tilde{g}_B$. To ease the exposition, we set 
$$
\hat{g}_B := \tilde{g}_B.
$$

The comparison between the curvatures of $\hat{g}_M $ and $\tilde{g}_M$ will be based on the Gromoll--Walschap formulas for the changes in the curvature tensor of a Riemannian submersion when the metric on the fibers is changed by a warping function of the base (cf  (\ref{warped M metric defn}) and (\ref{hat g})).

We denote the covariant derivative of $g_M$ by $\nabla$. We add a tilde or a hat above a covariant derivative  or a curvature to denote its association to the metrics $\tilde{g}_M$ and $\hat{g}_M$, respectively.

We start by recording the following general proposition on the impact of a $C^1$-small change to a fixed metric  on the metric's covariant derivative  and the gradient and Hessian of a fixed real valued function. 

\begin{prop}\label{things-are-small-lemma-1}
	Let $(M,g)$ be a compact Riemannian manifold $M$. Let $\red{\cal A}$ be a fixed finite atlas for $M$ that satisfies \eqref{extended chart}, and let $f:M\to\R$ be a fixed $C^\infty$ function on $M$. 
 
 For every $\epsilon >0$, there is a $\delta >0$, that also depends on $g$, $\red{\cal A}$ and $f$, with the following property. If $\hat{g}$ is a Riemannian metric  on $M$ that satisfies 
	\begin{equation}\label{change-in-g-is-small}
	   \norm{g-\hat{g}}_{C^{1}, g_0}<\delta, 
	\end{equation}
then 
\begin{equation}\label{change-in-grad-is-small}
	    \norm{\nabla f-\hat\nabla f}_g<\epsilon,
	\end{equation}
\begin{equation}\label{change-in-hessian-is-small}
	    \norm{\hess_f-\widehat\hess_f}_g<\epsilon, \hspace{0.08in} \text{and }
	\end{equation}
\begin{equation}\label{change-in-covariant-is-small}
	    \norm{\nabla_{\boldsymbol{\cdot}} E-\hat\nabla_{\boldsymbol{\cdot}} E}_g <\epsilon 
	\end{equation}
 for any coordinate field $E$ of any chart of $\red{\cal A}$.  
\end{prop}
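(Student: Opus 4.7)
The plan is to transfer the hypothesis into uniform coordinate estimates via Proposition \ref{two norms prop}, carry out the computations chart by chart, and then transfer back. Concretely, applying Proposition \ref{two norms prop} with $g$ playing the role of $g_0$, the assumption \eqref{change-in-g-is-small} implies that for every chart $(U,\phi)\in\cal U$ the components $g_{ij},\hat g_{ij}$ and their first partial derivatives satisfy
\begin{equation*}
|g_{ij}-\hat g_{ij}|+|\partial_k g_{ij}-\partial_k\hat g_{ij}| = O(\delta)
\end{equation*}
uniformly on $U$. Since $\cal U$ is finite and each chart extends past the closure of its domain by \eqref{extended chart}, the implicit constant depends only on $g$ and $\cal U$.

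From this I would extract two local estimates. \emph{First}, the inverse components satisfy $|\hat g^{ij}-g^{ij}|=O(\delta)$: compactness of $M$ gives a uniform lower bound on the smallest eigenvalue of $(g_{ij})$ in every chart, and matrix inversion is Lipschitz on the resulting compact set of positive-definite matrices. \emph{Second}, applying the Koszul formula
\begin{equation*}
\Gamma^k_{ij}=\tfrac{1}{2}g^{k\ell}\left(\partial_i g_{j\ell}+\partial_j g_{i\ell}-\partial_\ell g_{ij}\right),
\end{equation*}
and combining the first estimate with the coordinate bound on $\partial g-\partial\hat g$, one gets $|\hat\Gamma^k_{ij}-\Gamma^k_{ij}|=O(\delta)$ in each chart.

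The three conclusions are now immediate, with $\partial_j f,\partial_i\partial_j f$ uniformly bounded because $f$ is fixed and smooth on the compact manifold $M$. For \eqref{change-in-grad-is-small}, in a chart
\begin{equation*}
\hat\nabla f-\nabla f=(\hat g^{ij}-g^{ij})(\partial_j f)\,E_i,
\end{equation*}
which is $O(\delta)$ in coordinate $C^0$-norm. For \eqref{change-in-covariant-is-small} with $E=E_j$ a coordinate field,
\begin{equation*}
\hat\nabla_{E_i}E_j-\nabla_{E_i}E_j=(\hat\Gamma^k_{ij}-\Gamma^k_{ij})E_k=O(\delta)
\end{equation*}
directly from the second local estimate, and the general direction follows by linearity. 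For \eqref{change-in-hessian-is-small}, I would use $\hess_f(E_i,E_j)=\partial_i\partial_j f-\Gamma^k_{ij}\partial_k f$; the second-derivative term cancels in $\widehat\hess_f-\hess_f$, leaving $(\Gamma^k_{ij}-\hat\Gamma^k_{ij})\partial_k f=O(\delta)$. Finally, I convert each coordinate-wise $O(\delta)$ statement back to a bound in the $g$-norm by invoking the $p=0$ implication of Proposition \ref{two norms prop} in the opposite direction.

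The only real obstacle is bookkeeping: one must verify that every implicit constant in the cascading $O(\delta)$ estimates depends only on the fixed data $g$, $\cal U$, and $f$, not on $\hat g$. Compactness of $M$, finiteness of $\cal U$ together with \eqref{extended chart}, and the uniform positivity of $(g_{ij})$ collectively guarantee this, so at the end one simply picks $\delta$ small enough that the accumulated constant times $\delta$ is less than $\epsilon$ for all three conclusions simultaneously.
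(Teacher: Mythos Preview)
Your proof is correct and complete. It differs from the paper's in organization rather than in spirit: you reduce everything to a single coordinate computation (closeness of inverse metrics and Christoffel symbols, from which all three conclusions drop out), whereas the paper treats each conclusion by a separate coordinate-free manipulation. For the gradient the paper uses the defining relation $g(\nabla f,y)=df(y)=\hat g(\hat\nabla f,y)$ and estimates $|(g-\hat g)(\hat\nabla f,y)|$ directly; for the covariant derivative it invokes the Koszul formula on coordinate fields and a triangle-inequality splitting; for the Hessian it expands $\hess_f(X,Y)=Xdf(Y)-g(\nabla f,\nabla_X Y)$ and feeds in the two previous estimates. Your route is cleaner and more uniform, since once $|\hat g^{ij}-g^{ij}|$ and $|\hat\Gamma^k_{ij}-\Gamma^k_{ij}|$ are $O(\delta)$ the three conclusions are one-line coordinate identities; the paper's route, while slightly more invariant in flavor, has to chase the constant in $\|\hat\nabla f\|_{\hat g}\le K$ through an extra step. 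Either way the essential content is the Lipschitz dependence of the connection on the metric in $C^1$ together with compactness.
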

\begin{proof}
	For all $y \in TM$, 
	\begin{eqnarray}\label{dfn of grad}
	   g( \nabla f,y) =df(y) =\hat{g}(\hat\nabla f,y).  
	\end{eqnarray}
This gives us 
	\begin{align}
	\abs{g( \nabla f,y) - g( \hat\nabla f,y)} 
				\;& \leq \; \abs{g(\nabla f,y) - \hat g(\hat\nabla f,y)}
					\;+\; 	\abs{ \hat g(\hat\nabla f,y) -  g(\hat\nabla f,y)}  \nonumber \\
				\;& \leq \; 0 \;+\; \abs{(g-\hat g)\left(\hat\nabla f,y\right)}, \hspace{0.09in} \text{by} \; (\ref{dfn of grad} ). 
			 \label{grad diff}
	\end{align}
	
Since $M$ is compact, there is a $K>0$ so that 
\begin{equation*}
\max \norm{\hat{\nabla}f}_{\hat{g}}\leq K.
\end{equation*}
This together with the fact that $\norm{g-\hat{g}}_{C^{0},g_0}<\delta$
implies that 
\begin{equation*}
\max \norm{\hat{\nabla}f}_{g}\leq K\left( 1+\delta \right), 
\end{equation*}
and for $\norm{y}_{g}=1$, we have $\norm{y}_{\hat{g}}\leq 1+\delta$. Thus for $\norm{y}_{g}=1,$
\begin{equation*}
\abs{ \left( g-\hat{g}\right) \left( \hat{\nabla}f,y\right) } \leq K(1+\delta)^2 \delta < 4K\delta,
\end{equation*}
provided $\delta <1$. So
\begin{eqnarray*}
   \abs{g( \nabla f,y) - g( \hat\nabla f,y)}&<&4K\delta\\
                                            &<& \epsilon,
\end{eqnarray*}
provided we choose $\delta < \frac{\epsilon}{4K}$, which completes the proof of  
(\ref{change-in-grad-is-small}).

 To prove (\ref{change-in-covariant-is-small}) we write 
 \begin{equation}\label{tri ineq}
 \begin{split}
     \abs{g( \nabla_X E, Y) - g( \hat \nabla_X E, Y)}
				\leq \; & \abs{g(\nabla_X E, Y) -  \hat g( \hat\nabla_X E, Y)}\\
                &+ \abs{( \hat g-  g)(\hat\nabla_X E, Y)}.
 \end{split}
 \end{equation}
Now suppose that  $X$ and $Y$ are coordinate fields for $\red{\cal A}$. Since  $\red{\cal A}$ satisfies \eqref{extended chart} and there are only a finite number of possible $X$ and $Y$, the \red{Koszul} formula together with Proposition \ref{two norms prop} and  (\ref{change-in-g-is-small}) gives us that 
\begin{equation}\label{1st term}
\abs{  g(\nabla_X E, Y) - \hat g(\hat\nabla_X E, Y)} <  \frac{\epsilon}{3}
\end{equation}
and
\begin{equation}\label{2nd term}
\abs{( \hat g- g)(\hat \nabla_X E, Y)} < \frac{\epsilon}{3},
\end{equation}
provided $\delta$ is sufficiently small. Combining \eqref{tri ineq}, \eqref{1st term}, and \eqref{2nd term} with \eqref{compactness} gives us (\ref{change-in-covariant-is-small}). 
	
	To prove \eqref{change-in-hessian-is-small} we note that
	\begin{align*}
	\hess_f(X,Y) & = g(\nabla_X \nabla f, Y) \\
				 & = X g(\nabla f, Y) - g(\nabla f, \nabla_X Y) \\
				 & = X df(Y) - g(\nabla f, \nabla_X Y),
	\end{align*}
	and similarly, 
	\[ 
 \widehat\hess_f(X,Y) = X df(Y) - \hat g(\hat\nabla f, \hat\nabla_X Y).
 \]
	In particular, 
\[ 
 \hess_f(X,Y) - \widehat\hess_f(X,Y) = 
    \hat g(\hat\nabla f, \hat\nabla_X Y) - g(\nabla f, \nabla_X Y).
\]
	Thus for coordinate fields $X$ and $Y$, 
	\begin{align*}
 \abs{ (\hess_f - \widehat\hess_f) (X,Y)}  
                 \leq\; & \abs{\hat g(\hat\nabla f, \hat\nabla_X Y) - g(\hat\nabla f, \hat\nabla_X Y)}\\
                & + \abs{ g(\hat\nabla f, \hat\nabla_X Y) -  g(\nabla f, \hat\nabla_X Y)}  \\
			& + \abs{g(\nabla f, \hat\nabla_X Y) - g(\nabla f, \nabla_X Y)}   \\
                 = \; & \abs{(\hat g-g)(\hat\nabla f, \hat\nabla_X Y)} \\
                & + \abs{g(\hat\nabla f-\nabla f, \hat\nabla_X Y)} \\
                & + \abs{g(\nabla f, \hat\nabla_X Y - \nabla_X Y)}.
	\end{align*}
Combining (\ref{change-in-g-is-small}), (\ref{change-in-grad-is-small}), and (\ref{change-in-covariant-is-small}) with \eqref{compactness} gives us
\[\abs{ (\hess_f - \widehat\hess_f) (X,Y)} < \epsilon,\]
provided $\delta$ is small enough.
\end{proof}

Since the Gromoll--Walschap formulas are in terms of the fundamental tensors $A$, $S$, and $\sigma$ of the submersion $M \longrightarrow B$, we also need the following proposition which records how these tensors change when we replace $g_M$ with $\hat{g}_M$.
\begin{prop}\label{things-are-small-lemma-2}
	Let $g_M$ and $\hat g_M$ be as above. Then for any $X,Y\in\cal H$ and $U,V\in\cal V$,

    \begin{align}
        A_X Y &= \hat{A}_X Y,\label{no-change-in-A}\\
        S_X U &= \hat{S}_X U,\label{no-change-in-S}\\
        A^*_X U &= e^{2\tilde{\omega}_{h}}\hat{A}^*_X U,\label{change-in-A-star-is-small}
        \intertext{and}
        \sigma(U,V) &= e^{2 \tilde{\omega}_{h}}\hat\sigma(U,V),\label{change-in-sigma-is-small}
    \end{align}
    where the tensors $A$, $S$, and $\sigma$ are defined in (\ref{A-tensor}), (\ref{S-tensor}), and (\ref{sigma-tensor}), respectively. 
    In particular, since $\tilde{\omega}_{h}$ is $C^1$-small, the tensors $\hat\sigma$ and $\hat A^*$ are $C^1$-close to their $g_M$-counterparts.
\end{prop}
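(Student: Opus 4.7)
The plan is to reduce each identity to a Koszul-formula comparison, using the fact that $\hat{g}_M$ is block-diagonal with respect to the $g_M$-orthogonal decomposition $TM = \cal H\oplus\cal V$. Two structural observations will do most of the work. First, because $\hat{g}_M$ has no cross terms between $\cal H$ and $\cal V$, the two metrics induce the same horizontal and vertical distributions, and hence the same horizontal and vertical projections. Second, $\hat{g}_M$ agrees with $g_M$ on any pair of tangent vectors at least one of which is vertical, while it differs from $g_M$ by the multiplicative factor $e^{2\tilde{\omega}_h}$ precisely when both arguments lie in $\cal H$.

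Granted these observations, \eqref{no-change-in-A} is immediate: $A_X Y = \tfrac12[X,Y]^\v$ depends only on the metric-free Lie bracket and on the vertical projection, neither of which changes. Identity \eqref{change-in-A-star-is-small} then falls out of the defining relations for the adjoints: for $Y\in\cal H$ and $U\in\cal V$, the equality $\hat{g}_M(\hat A_X Y, U) = \hat{g}_M(Y, \hat A^*_X U)$ rewrites, using \eqref{no-change-in-A} together with the vertical-agreement observation on the left and the horizontal conformal factor on the right, as $g_M(Y, A^*_X U) = e^{2\tilde\omega_h}\, g_M(Y, \hat A^*_X U)$, which gives the scaling.

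For the remaining two identities I would expand the relevant Koszul formulas on both sides with $X\in\cal H$ and $U,V\in\cal V$. In $2g_M(\nabla_U X, V)$ versus $2\hat g_M(\hat\nabla_U X, V)$, the three directional-derivative terms either vanish (by $\cal H\perp\cal V$ for both metrics) or agree between $g_M$ and $\hat g_M$ (since $U$ and $V$ are vertical), and the three bracket terms also agree, because $[U,V]\in\cal V$ by integrability of the fibers while the remaining brackets are paired with a vertical vector. This forces $(\hat\nabla_U X)^\v = (\nabla_U X)^\v$, which is \eqref{no-change-in-S}. The identical term-by-term matching applied to $2g_M(\nabla_U V, X)$ versus $2\hat g_M(\hat\nabla_U V, X)$ yields equality of the two expansions; translating the left-hand sides through $g_M(\nabla_U V, X) = g_M(\sigma(U,V), X)$ and $\hat g_M(\hat\nabla_U V, X) = e^{2\tilde\omega_h}\, g_M(\hat\sigma(U,V), X)$ produces \eqref{change-in-sigma-is-small}. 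I do not anticipate a genuine obstacle; the only care required is in tracking which Koszul terms inherit the conformal factor and which do not. Notably, the $\pi$-basicness of $\tilde\omega_h$ (its constancy along fibers) plays no role at this stage---it will enter only downstream, when $\hat g_M$ is compared with $\tilde g_M$.
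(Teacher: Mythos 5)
Your proposal is correct and follows essentially the same route as the paper: \eqref{no-change-in-A} from the metric-independence of the bracket and vertical projection, \eqref{change-in-A-star-is-small} by dualizing through the defining relation for the adjoint with the horizontal conformal factor, and \eqref{no-change-in-S} by a Koszul-formula comparison in which every surviving term is evaluated on a vertical vector. The only minor divergence is at \eqref{change-in-sigma-is-small}, where you rerun the Koszul argument directly on $2\hat g_M(\hat\nabla_U V,X)$, whereas the paper obtains it by dualizing from the already-proved $S$-identity via $g_M(\sigma(U,V),X)=g_M(S_X V,U)$; both are correct and equally elementary, the paper's variant simply reuses the previous computation.
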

\begin{proof}
    Equation (\ref{no-change-in-A}) follows from the fact that
    \[
    A_X Y = \frac{1}{2}[X,Y]^\v = \hat A_X Y.
    \]
  Dualizing this we find that
    {\color{red}
    \begin{align*}
        g_{M}( A_X^* U,Y) 
        & = g_{M}( U,A_{X}Y) 
        = g_{M}( U,\hat{A}_{X}Y) \\
        & = \hat{g}_{M}( U,\hat{A}_{X}Y) 
        = \hat{g}_{M}( \hat{A}_{X}^* U,Y) \\
        & = e^{-2\tilde{\omega}_{h} }g_{M}( \hat{A}_{X}^*U,Y). 
    \end{align*}}
    As this holds for all horizontal vectors $Y$,
    \begin{equation*}
    A_{X}^{\ast }T=e^{-2\tilde{\omega}_{h} }\hat{A}_{X}^{\ast }U, \;\; \text{proving (\ref{change-in-A-star-is-small}).}
    \end{equation*}
    For vertical vectors $V \in {\cal V}$, $g_M (V, \cdot ) = \hat{g}_M (V, \cdot ) $, so by  
    the \red{Koszul} formula, 
    \begin{equation} \label{same S}
    g_M(S_X V, U) = \hat g_M(\hat S_X V, U), \; \; \text{for all $U,V\in {\cal V} $}.  
    \end{equation}
Since $g_M ( \cdot, U ) = \hat{g}_M (\cdot ,U) $ for all $U\in  {\cal V}$, this proves Equation (\ref{no-change-in-S}).

Dualizing we get
{\color{red}
\begin{align*}
        g_{M}( \sigma(U,V), X)  
        &= g_M(S_X V, U) 
        = \hat{g}_M(\hat S_X V, U)    \;\;\; \ \text{(by (\ref{same S}))} \\
        &=  \hat{g}_{M}( \hat{\sigma}(V,U), X) 
        =  e^{2\tilde{\omega}_{h}}g_{M}( \hat{\sigma}(V,U), X). 
\end{align*}}
    As the previous display is valid for all horizontal vectors $X,$ 
    \begin{equation*}
    \sigma ( U,V ) =e^{2\tilde{\omega}_{h} }\hat{\sigma}(U,V),
    \end{equation*}%
    proving (\ref{change-in-sigma-is-small}). 
\end{proof}

Combining the previous result with the Gray--O'Neill formulas for the curvatures of the total space of a Riemannian submersion immediately gives us the following comparison between the $(1,3)$ curvature tensors $R$ and $\hat{R}$ (see \cite{Gray}, \cite{oneill}, or Theorem 1.5.1 in \cite{gromoll_walschap}).  
\begin{cor}\label{no biggie}
    For every $\epsilon >0$, there is a $\delta >0$ so that if 
   \begin{equation*}
\| \tilde{\omega}_h \|_{C^1} < \delta,
 \end{equation*}  
then  the curvature tensors of $g_{M}$ and $\hat{g}_{M}$ satisfy 
\begin{equation*}
    \norm{ \Big(\hat{R}_M - \pi^*(\hat{R}_B)\Big)\Big|^{\h}_{\cal{H}\oplus\cal{H}\oplus\cal{H}} 
    - \Big(R_M - \pi^*(R_B)\Big)
    \Big|^{\h}_{\cal{H}\oplus\cal{H}\oplus\cal{H}}}
    <\epsilon
\end{equation*}
and
\[
\norm{ \left(\hat{R}_M - R_M\right)\Big|^{\v}_{\cal{H}\oplus\cal{H}\oplus\cal{H}}} < \epsilon,
\]
and when we evaluate $\norm{\hat{R}_M-R_M}$ on any other triple of unit vectors in $%
\mathcal{H}\cup \mathcal{V}$, the result has length $<\epsilon$.
\end{cor}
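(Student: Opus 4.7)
The plan is to subtract the Gray--O'Neill horizontal, mixed, and vertical curvature equations (as stated in Theorem 1.5.1 of \cite{gromoll_walschap}) for $g_M$ from those for $\hat{g}_M$ and then bound each surviving term using Propositions \ref{things-are-small-lemma-1} and \ref{things-are-small-lemma-2}. Recall that these equations express $R_M$ (respectively $\hat{R}_M$) evaluated on a triple drawn from $\cal{H}$ and $\cal{V}$ in terms of the pulled-back base curvature $\pi^{\ast}R_B$ (respectively $\pi^{\ast}\hat{R}_B$), algebraic combinations of the fundamental tensors $A,A^{\ast},S,\sigma$ (respectively $\hat{A},\hat{A}^{\ast},\hat{S},\hat{\sigma}$), and their covariant derivatives.

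By Proposition \ref{things-are-small-lemma-2}, $\hat{A}=A$ and $\hat{S}=S$ exactly, while $\hat{A}^{\ast}-A^{\ast}=(e^{-2\tilde{\omega}_h}-1)A^{\ast}$ and $\hat{\sigma}-\sigma=(e^{-2\tilde{\omega}_h}-1)\sigma$. Since $\|\tilde{\omega}_h\|_{C^1}<\delta$ and $M$ is compact, each of these differences has $C^0$-norm of order $O(\delta)$. Moreover, because $g_M-\hat{g}_M$ is itself $C^1$-small of order $\delta$, Proposition \ref{things-are-small-lemma-1} gives that $\nabla_E F - \hat{\nabla}_E F$ is $C^0$-small for any pair of coordinate fields of a fixed finite atlas on $M$. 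Consequently, the hatted covariant derivatives of the fundamental tensors differ from their unhatted counterparts by $O(\delta)$ in $C^0$.

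For the first claim, the horizontal Gray--O'Neill equation writes $(R_M-\pi^{\ast}R_B)^{\h}$, restricted to $\cal{H}\oplus\cal{H}\oplus\cal{H}$, as a fixed quadratic polynomial $\Phi$ in $A$ and $A^{\ast}$. The same formula holds with every object hatted, so subtracting cancels the pulled-back base curvatures and leaves $\Phi(\hat{A},\hat{A}^{\ast})-\Phi(A,A^{\ast})$, whose $C^0$-norm is $O(\delta)$ by the previous paragraph. The second claim follows from the mixed Gray--O'Neill equation, which expresses $(R_M(X,Y)Z)^{\v}$ on horizontal inputs as a polynomial in $A$ and $\nabla A$ with no base-curvature contribution (since $R_B$ is horizontally valued), so $(\hat{R}_M-R_M)^{\v}$ restricted to $\cal{H}\oplus\cal{H}\oplus\cal{H}$ consists of terms each of size $O(\delta)$.

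The third claim is handled analogously on the remaining triples of unit vectors drawn from $\cal{H}\cup\cal{V}$: in each case with at least one vertical input, the Gray--O'Neill formulas are polynomial in $A, A^{\ast}, S, \sigma$ and their covariant derivatives, with no direct $R_B$ contribution to the components in question, so the hatted and unhatted versions differ term by term by $O(\delta)$. Choosing $\delta$ small enough in terms of $\epsilon$ and the fixed $C^1$-bounds on $A,A^{\ast},S,\sigma$ over the compact manifold $M$ then yields all three estimates. The main obstacle is purely bookkeeping: one has to verify that each Gray--O'Neill equation really does produce an expression of the claimed form, and that the conformal factor $e^{2\tilde{\omega}_h}$ (the only source of change in $A^{\ast}$ and $\sigma$ by Proposition \ref{things-are-small-lemma-2}) is $C^1$-close to $1$ of the correct order in $\delta$, which is immediate from $\|\tilde{\omega}_h\|_{C^1}<\delta$.
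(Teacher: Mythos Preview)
Your approach is correct and is precisely the one the paper intends: the corollary is stated there without proof, as an immediate consequence of the Gray--O'Neill formulas (Theorem 1.5.1 of \cite{gromoll_walschap}) combined with Proposition \ref{things-are-small-lemma-2}. Your sketch fleshes this out accurately, and your explicit invocation of Proposition \ref{things-are-small-lemma-1} to control the covariant-derivative terms appearing in the mixed Gray--O'Neill equations is a useful point that the paper leaves implicit.
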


The following result from Gromoll and Walschap's book (\cite{gromoll_walschap}) describes the change in curvature tensor of the total space of a Riemannian submersion when the metric on the vertical distribution is warped by a function on the base. When the warping function is arbitrary, these equations (\eqref{GW-HHHv}--\eqref{GW-VVVv} below) are rather complicated. However, in our planned application, $\tilde{\omega}_v$ plays the role of the function $\tilde{\phi}: M \to (0,\infty)$ in Proposition \ref{gromoll-walschap-prop}. Since $\tilde{\omega}_v$ is  $C^1$-small, nearly every term on the right side of Equations (\ref{GW-HHHv})--(\ref{GW-VVVv}) will be very small.

\begin{prop}\label{gromoll-walschap-prop}
	Let $\pi:(M^n,\hat{g}_M)\to(B^b,\hat{g}_B)$ be any Riemannian submersion, let $\phi: B \to (0,\infty)$ be $C^{\infty}$, and let $\tilde{\phi} := \phi\circ\pi$ be its pullback to $M^{\red{n}}$. Set 
	\[ 
 \tilde{g}_M = \pi^*(\hat{g}_B)|_{\cal H \oplus \cal H} 
             + e^{2\tilde{\phi}}\hat{g}_M|_{\cal V\oplus \cal V}.
 \]
  
	Let $X,Y,Z\in\cal H$ and $T,T_1,T_2,T_3\in\cal V$, and let $\inn{\cdot,\cdot}$ denote the inner product associated with the metric $\hat{g}_M$. The components of the curvature tensor associated with $\tilde{g}_M$ are as follows:
     \begin{align}\label{GW-HHHv}	
         \begin{split}
             \tilde{R}^\v_M(X,Y)Z =\;& \hat{R}^\v_M(X,Y)Z 
                             + \inn{\hat{\nabla}\tilde{\phi}, X}\hat{A}_Y Z \\
                             &- \inn{\hat{\nabla}\tilde{\phi}, Y}\hat{A}_X Z
                             - \inn{\hat{\nabla}\tilde{\phi}, Z}\hat{A}_X Y,
         \end{split}
    \end{align}
    
	\begin{equation}\label{GW-HHHh}	
		\tilde{R}^\h_M(X,Y)Z = e^{2\tilde{\phi}}\hat{R}^\h_M(X,Y)Z 
                             + (1-e^{2\tilde{\phi}})\hat{R}_B(X,Y)Z,
	\end{equation}
     
	\begin{align}\label{GW-HVHv}		
	\begin{split}
		\tilde{R}^\v_M(X,T)Y  =\; & \hat{R}^\v_M(X,T)Y 
                    + (1-e^{2\tilde{\phi}}) \hat{A}_X \hat{A}^*_Y T \\
				 & + \left( \widehat{\hess}^M_{\tilde{\phi}}(X,Y) 
                    +\inn{\hat{\nabla}\tilde{\phi},X}\inn{\hat{\nabla}\tilde{\phi},Y} \right) T \\
                  & - \left( \inn{\hat{\nabla}\tilde{\phi},X}\hat{S}_Y T 
                    + \inn{\hat{\nabla}\tilde{\phi},Y} \hat{S}_X T \right),
	\end{split}
	\end{align}
	
	\begin{align}\label{GW-HVHh}		
	\begin{split}
		e^{-2\tilde{\phi}} \tilde{R}^\h_M(X,T)Y
                    =\; & \hat{R}^\h_M(X,T)Y  - \inn{\hat{\nabla}\tilde{\phi},Y} \hat{A}^*_X T\\
                    & - 2\inn{\hat{\nabla}\tilde{\phi},X} \hat{A}^*_Y T
                      + \inn{\hat{A}_X Y,T} \hat{\nabla}\tilde{\phi},
	\end{split}
	\end{align}

	\begin{equation}\label{GW-VVHh}	
		e^{-2\tilde{\phi}} \tilde{R}^\h_M(T_1,T_2)X = \hat{R}^\h_M(T_1,T_2)X + (1-e^{2\tilde{\phi}}) 
											(\hat{A}^*_{\hat{A}^*_X T_1}T_2 - \hat{A}^*_{\hat{A}^*_X T_2}T_1), 
	\end{equation}
	
	\begin{equation}\label{GW-VVVh}	
	\tilde{R}^\h_M(T_1,T_2)T_3 = \hat{R}^\h_M(T_1,T_2)T_3, \;\; \text{and}
	\end{equation}

	\begin{align}\label{GW-VVVv}		
	\begin{split}
	\tilde{R}^\v_M(T_1, T_2) T_3 =\; &
	\hat{R}^\v_M(T_1, T_2) T_3 
	+ (1-e^{2\tilde{\phi}}) \left\{ \hat{S}_{\hat{\sigma}(T_2, T_3)} T_1 - \hat{S}_{\hat{\sigma}(T_1, T_3)} T_2 \right\} \\
	& +e^{2\tilde{\phi}} \left\{\left(\hat{S}_{\hat{\nabla} \tilde{\phi}}-|\hat{\nabla} \tilde{\phi}|^2 I\right) (\inn{T_2, T_3} T_1- \inn{T_1, T_3} T_2)\right. \\
	&  + \left. \inn{\hat{\nabla} \tilde{\phi}, \hat{\sigma}(T_2, T_3)}T_1 - \inn{\hat{\nabla} \tilde{\phi}, \hat{\sigma}(T_1, T_3)} T_2\right\},
	\end{split}
	\end{align}
    where $I$ denotes the identity  map, and, as in \cite{gromoll_walschap}, in \eqref{GW-HHHh}, we are conflating $\hat{R}_B(X,Y)Z$ with its horizontal lift. 
\end{prop}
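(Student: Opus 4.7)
The plan is to derive each identity by direct computation from the Gray--O'Neill formulas for the curvature of a Riemannian submersion applied to $\tilde{g}_M$, after first determining how the change of metric affects the fundamental tensors. Since $\tilde{\phi} = \phi \circ \pi$ is basic, it is constant along fibers; consequently $\hat{g}_M$ and $\tilde{g}_M$ share the same vertical distribution $\cal V$, and since $\tilde{g}_M(X, V) = \hat{g}_M(X, V) = 0$ for every $X \in \cal H$ and $V \in \cal V$, they also share the same horizontal distribution $\cal H$. In particular, $\pi$ remains a Riemannian submersion with respect to $\tilde g_M$, and the induced base metric is again $\hat g_B$.

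The second step is to compute the difference $\tilde\nabla - \hat\nabla$ via the Koszul formula, crucially using that $\hat\nabla\tilde\phi$ is horizontal since $V(\tilde\phi) = 0$ for every vertical $V$. Separating the cases by the types of the arguments, I expect to find that $\tilde\nabla_X Y$ differs from $\hat\nabla_X Y$ only in its vertical component (producing the $\hat A$-corrections visible in \eqref{GW-HHHv}); that $\tilde\nabla_X T$ and $\tilde\nabla_T X$ acquire $(X\tilde\phi)T$-type terms together with the appropriate scalings of $\hat A^*$ and $\hat S$; and that $\tilde\nabla_T U$ picks up the standard conformal-warping term $-\hat{g}_M(T,U)\,\hat\nabla\tilde\phi$ on its horizontal side while its vertical component still agrees with $\hat\nabla_T U$. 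Packaging these into the fundamental tensors of $\tilde g_M$ yields $\tilde A = \hat A$ (the bracket does not see the metric), together with explicit scaling-plus-correction identities for $\tilde A^*$, $\tilde S$, and $\tilde\sigma$, entirely in the spirit of Proposition \ref{things-are-small-lemma-2}.

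The final step is to substitute these connection formulas into $R(X,Y)Z = \nabla_X \nabla_Y Z - \nabla_Y \nabla_X Z - \nabla_{[X,Y]} Z$ and project onto $\cal H$ and $\cal V$. The pure-horizontal components \eqref{GW-HHHh} and \eqref{GW-HHHv} then follow from the Gray--O'Neill identity on $(B,\hat g_B)$ plus corrections of the form $\hat{g}_M(\hat\nabla\tilde\phi, X) \hat A_Y Z$; the identity \eqref{GW-VVHh} is immediate from the scaling of $\hat A^*$; and the HVH components \eqref{GW-HVHv}--\eqref{GW-HVHh} produce $\widehat{\hess}^M_{\tilde\phi}$ through the identity $X(Y(\tilde\phi)) - (\hat\nabla_X Y)(\tilde\phi) = \widehat{\hess}^M_{\tilde\phi}(X,Y)$, together with the quadratic cross-terms $\hat{g}_M(\hat\nabla\tilde\phi,X)\hat{g}_M(\hat\nabla\tilde\phi,Y)$. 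I expect the main obstacle to be the VVV component \eqref{GW-VVVv}: the terms $\hat S_{\hat\nabla\tilde\phi}$, $|\hat\nabla\tilde\phi|^2 I$, and the $\hat\sigma$-couplings all arise simultaneously from warping the fiber metric by the factor $e^{2\tilde\phi}$, and repackaging them into the two groupings $(1 - e^{2\tilde\phi})\{\cdots\}$ and $e^{2\tilde\phi}\{\cdots\}$ displayed in \eqref{GW-VVVv} requires a careful regrouping that reflects both the Gauss--Codazzi structure of the fibers and the conformal change induced on each fiber.
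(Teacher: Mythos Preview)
The paper does not prove this proposition at all: it is quoted verbatim as a result from Gromoll and Walschap's book \cite{gromoll_walschap}, with no argument supplied. So there is no ``paper's own proof'' to compare against; the authors simply invoke the formulas as established facts and use them downstream.

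Your outline is a reasonable sketch of how one would actually derive these identities, and it tracks the structure of the computations one finds in \cite{gromoll_walschap}. The three-step plan---first identify $\cal H$ and $\cal V$ for $\tilde g_M$, then compute $\tilde\nabla - \hat\nabla$ via Koszul and extract $\tilde A$, $\tilde A^*$, $\tilde S$, $\tilde\sigma$, then plug into the Gray--O'Neill structure equations---is the standard route, and your expectations about where each correction term originates (e.g.\ the Hessian from $X(Y\tilde\phi) - (\hat\nabla_X Y)\tilde\phi$, the $\hat A$-corrections in \eqref{GW-HHHv} from the vertical part of $\tilde\nabla_X Y$, the warping terms in \eqref{GW-VVVv} from the fiber conformal change) are correct. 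If you intend to write out a full proof, the bookkeeping in \eqref{GW-VVVv} is indeed the most delicate part, exactly as you anticipate; the cleanest way through is to first write the fiber curvature of $\tilde g_M$ via the Gauss equation relative to $\hat g_M$ and then apply the standard conformal-change formula for the intrinsic fiber metric, which separates the $(1-e^{2\tilde\phi})$ and $e^{2\tilde\phi}$ groupings naturally.
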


 As mentioned above, the functions $\tilde{\omega}_h$ and $\tilde{\omega}_v$ that define our deformation will be chosen to be $C^1$-small. This together with Propositions \ref{things-are-small-lemma-1}, \ref{things-are-small-lemma-2}, and \ref{gromoll-walschap-prop} and Corollary \ref{no biggie} imply that most components of the difference of the curvature operators
\[ \frak{D} \cal R_M := \widetilde{\cal R}_M - \cal R_M \]
are small. We make this precise in the following proposition. 
 
\begin{prop}\label{only-two-large-curvatures-prop}
Let $M$ be a compact smooth manifold, and let
\[\pi:(M^{\red{n}},g_M)\to(B^b,g_B)\]
be a Riemannian submersion. Then for every $\epsilon >0$ there is $\delta>0$ that also depends on $g_M$ and has the following properties.

Let $\tilde{g}_M$ be as in (\ref{warped M metric defn}).
Let $\cal R_M$ and $\widetilde{\cal R}_M$ be the curvature operators of $g_M$ and $\tilde{g}_M$, respectively, and set
\[ \frak{D} \cal R_M := \widetilde{\cal R}_M - \cal R_M. \] 
If 
\begin{equation}\label{C1 small}
    \norm{{\omega}_h}_{C^1} < \delta, \ \norm{\tilde{\omega}_v}_{C^1} < \delta,
\end{equation}
\begin{eqnarray}
    - 2 \hess^M_{{\omega}_h}(\cdot,\cdot) &\geq& ( \lambda_{h,h} +1) g_B(\cdot, \cdot),  \; \; \text{and} \nonumber\\
    -\hess^M_{\tilde{\omega}_v}(\cdot, \cdot)|_{\cal H\oplus \cal H} &\geq& (\lambda_{h,v} +1)  g_M(\cdot, \cdot)|_{\cal H\oplus \cal H},
\end{eqnarray}
then for all horizontal $g_M$-orthonormal vectors $Y,Z$ and vertical $g_M$-orthonormal vectors $U,V$, 
\begin{eqnarray}\label{delta R Y Z}
        \norm{ \frak{D} \cal R_M(U\wedge V) } &<&  \epsilon, \\
    \norm{ \frak{D} \cal R_M(Y\wedge Z)^{( \cal H \wedge \cal V ) \oplus ( \cal V \wedge \cal V)} }& < & \epsilon,    \label{delta R Y Z-2}\\
\label{delta R Y T}     
    \norm{ \frak{D} \cal R_M(Y\wedge V)^{( \cal H \wedge \cal H ) \oplus ( \cal V \wedge \cal V)} }  &< & \epsilon,\\
g(\frak{D}\cal{R}_M\left( Y\wedge V\right) ,Y\wedge V) &\geq &\lambda
_{h,v},   \; \; \text{and} \label{just better than--XV-2}       \\
  g(\frak{D}\cal{R}_M\left( X\wedge Y\right) ,X\wedge Y) &\geq &\lambda
_{h,h}.  \label{just better than--XY} 
\end{eqnarray}
\end{prop}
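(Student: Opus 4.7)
The plan is to factor the deformation $g_M\to\tilde{g}_M$ through the intermediate metric $\hat{g}_M$ of \eqref{hat g} and to decompose
\begin{equation*}
\mathfrak{D}\mathcal{R}_M \;=\; (\widetilde{\mathcal{R}}_M-\hat{\mathcal{R}}_M) \;+\; (\hat{\mathcal{R}}_M-\mathcal{R}_M).
\end{equation*}
The passage $g_M\to\hat{g}_M$ is a horizontal conformal change driven by $\tilde{\omega}_h$, and the passage $\hat{g}_M\to\tilde{g}_M$ is a basic fiberwise warping driven by $\tilde{\omega}_v$. Because $\|\omega_h\|_{C^1}<\delta$ and $\|\tilde{\omega}_v\|_{C^1}<\delta$, Propositions \ref{things-are-small-lemma-1} and \ref{things-are-small-lemma-2} give us that $\hat{\nabla}$, $\widehat{\hess}$, $\hat{A}^*$, and $\hat{\sigma}$ are all $C^0$-close to their $g_M$-counterparts.

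For the second summand, Corollary \ref{no biggie} says that every component of $\hat{\mathcal{R}}_M-\mathcal{R}_M$ is $\epsilon$-small except on a purely horizontal triple with horizontal output, where it agrees with $\pi^*(\hat{R}_B-R_B)$ to within $\epsilon$. By Proposition \ref{neg Ricci at p}, $\hat{R}_B-R_B = -2\hess^B_{\omega_h}\circ g_B + O(\epsilon)$; the Kulkarni--Nomizu expansion \eqref{Kulkarni Nomizu} on $g_B$-orthonormal $X,Y$ gives $(-2\hess^B_{\omega_h}\circ g_B)(X,Y,Y,X) = -\hess^B_{\omega_h}(X,X)-\hess^B_{\omega_h}(Y,Y)$, which by the hypothesis on $-2\hess_{\omega_h}$ is $\geq \lambda_{h,h}+1$. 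Because this dominant piece lives in $\Lambda^2\mathcal{H}$, it produces \eqref{just better than--XY} but is projected away in \eqref{delta R Y Z-2} and does not appear in \eqref{delta R Y Z} or \eqref{delta R Y T}.

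For the first summand I would apply Proposition \ref{gromoll-walschap-prop} with $\tilde{\phi}=\tilde{\omega}_v$. Every term in \eqref{GW-HHHv}--\eqref{GW-VVVv} carrying a factor of $(1-e^{2\tilde{\phi}})$, $\hat{\nabla}\tilde{\phi}$, or $|\hat{\nabla}\tilde{\phi}|^2$ is $O(\delta)$, so the only genuinely surviving contribution is the Hessian term $\widehat{\hess}^M_{\tilde{\phi}}(X,Y)\,T$ in \eqref{GW-HVHv}, which governs the change in $R(X,V,V,X)$ for horizontal $X$ and vertical $V$. Proposition \ref{things-are-small-lemma-1} shows $\widehat{\hess}^M_{\tilde{\omega}_v}$ differs from $\hess^M_{\tilde{\omega}_v}$ by $O(\delta)$, so the hypothesis on $-\hess^M_{\tilde{\omega}_v}|_{\mathcal{H}\oplus\mathcal{H}}$ forces this term to contribute at least $\lambda_{h,v}+1-O(\delta)$ to $g(\mathfrak{D}\mathcal{R}_M(Y\wedge V),Y\wedge V)$, which (combined with the preceding paragraph's observation that the second-summand contribution to this pair is $O(\epsilon)$) produces \eqref{just better than--XV-2}.

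With the two dominant contributions located, the three smallness estimates \eqref{delta R Y Z}, \eqref{delta R Y Z-2}, and \eqref{delta R Y T} follow by inspecting each requested component: the surviving Gromoll--Walschap piece is $O(\delta)$ (no Hessian term lands in a $\Lambda^2\mathcal{V}$-projection of a horizontal bivector, or in a $\Lambda^2\mathcal{H}$- or $\Lambda^2\mathcal{V}$-projection of a mixed bivector), while the conformal piece from Corollary \ref{no biggie} is either off-diagonal and hence $O(\epsilon)$ or projected out. The main obstacle I expect is bookkeeping: matching each of \eqref{GW-HHHv}--\eqref{GW-VVVv} to the correct $\Lambda^2\mathcal{H}$, $\mathcal{H}\wedge\mathcal{V}$, or $\Lambda^2\mathcal{V}$ component of $\mathfrak{D}\mathcal{R}_M(\cdot)$, and tracking the conformal factors $e^{2\tilde{\omega}_h}$, $e^{2\tilde{\omega}_v}$ introduced when $\widetilde{\mathcal{R}}_M$ is paired against bivectors via $g_M$, each of which is within $O(\delta)$ of $1$.
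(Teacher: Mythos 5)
Your proposal is correct and follows essentially the same route as the paper's own proof: decompose $\mathfrak{D}\mathcal{R}_M$ through the intermediate metric $\hat{g}_M$, use Corollary \ref{no biggie} to control $\hat{\mathcal{R}}_M - \mathcal{R}_M$ (with the only surviving piece being $\pi^*(\hat{R}_B - R_B)$ on horizontal triples, which \eqref{Delta R at p} identifies with $-2\hess_{\omega_h}\circ g_B$ up to $O(\epsilon)$), and use Proposition \ref{gromoll-walschap-prop} to control $\widetilde{\mathcal{R}}_M - \hat{\mathcal{R}}_M$ (with the only surviving piece being the $\widehat{\hess}^M_{\tilde{\omega}_v}$ term in \eqref{GW-HVHv}). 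Your identification of the two dominant contributions, the Kulkarni--Nomizu evaluation, and the conversion via Propositions \ref{things-are-small-lemma-1}--\ref{things-are-small-lemma-2} all agree with what the authors do.
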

\begin{proof}
Since $M$ is compact, each of the fundamental tensors, $A, A^*, \sigma$, and $S$, for the submersion $ \pi:(M,g_M)\to(B^b,g_B)$ 
is uniformly bounded. This together with Proposition \ref{things-are-small-lemma-2} and the hypothesis that $\norm{\tilde{\omega}_h}_{C^1} < \delta$ gives us that each of these tensors is within $O(\delta)$ of the corresponding
tensors $\hat{A}$, $\hat{A}^*$, $\hat{S}$, and $\hat{\sigma}$ for $(M, \hat{g}_M)\to (B,\hat{g}_B)$. 
     This together with Corollary \ref{no biggie} gives us that, with one exception, in each of the Equations (\ref{GW-HHHv})--(\ref{GW-VVVv}), the difference between the two curvature tensors satisfies
\begin{equation}\label{small Delta R}
\norm{\tilde{R} - \hat{R}} < O(\delta).
\end{equation}
The exception is Inequality (\ref{GW-HVHv}), which after substituting $Y$ for $X$, $V$ for $T$, and $\tilde{\omega}_v$ for $\tilde{\phi}$ becomes
\begin{eqnarray*}
    \tilde{R}^\v_M(V,Y)Y & =  & \hat{R}^\v_M(V,Y)Y - {\widehat{\hess}}^M_{\tilde{\omega}_v}(Y,Y)V + O(\delta),
\end{eqnarray*}
which combined with Inequality \eqref{change-in-hessian-is-small} and Corollary \ref{no biggie} gives us
     \begin{eqnarray}\label{Hessian exception}
 \|  \tilde{R}^\v_M(V,Y)Y - \left( R^\v_M(V,Y)Y - {\hess}^M_{\tilde{\omega}_v}(Y,Y)V \right) \| <\epsilon,
     \end{eqnarray}
provided $\delta$  is sufficiently small. 

Together \eqref{small Delta R} and \eqref{Hessian exception} imply that \eqref{delta R Y Z}, \eqref{delta R Y Z-2}, and \eqref{delta R Y T} hold, provided $\delta$  is sufficiently small.  Since we have also assumed that 
 $$
 - \hess^M_{\tilde{\omega}_v}(\cdot,\cdot)|_{\cal H\oplus \cal H}\geq (\lambda_{h,v}+1)g_{M}(\cdot,\cdot)|_{\cal H\oplus \cal H}, 
 $$\eqref{Hessian exception} also implies that 
\begin{equation*}
 g(\frak{D}\cal{R}_M\left( Y\wedge V\right) ,Y\wedge V) \geq  \lambda_{h,v}, \; \; \text{provided $\varepsilon\in \left(0,\tfrac{1}{2}\right),$}    
\end{equation*}
as claimed in  \eqref{just better than--XV-2}. 

To obtain \eqref{just better than--XY}, we note that  Equation (\ref{GW-HHHh}) together with the fact that 
$\| \tilde{\omega}_v\|_{C^1} < \delta$ gives us
\begin{eqnarray}\label{GW-HHHh--2}	
		\tilde{R}^\h_M(X,Y)Z &=& e^{2\tilde{\omega}_v}\hat{R}^\h_M(X,Y)Z 
                             + (1-e^{2\tilde{\omega}_v})\tilde{R}_B(X,Y)Z \nonumber\\
                             &=& \hat{R}^\h_M(X,Y)Z + O(\delta).
	\end{eqnarray}
    
Equation (\ref{no-change-in-A}) together with the Gray--O'Neill  Equation and the fact that the restrictions of $\hat{g}_M$ and $g_{M}$ to the vertical distributions coincide gives us that
\begin{eqnarray}\label{Grey-O'N}	\label{H Delta}
\norm{(\hat{R}^\h_M -R_{M}^\h)(X,Y)Z} &=& \norm{(\hat{R}_B -R_B)(X,Y)Z} \nonumber \\
                            &=& \norm{\frak{D}R_B(X,Y)Z}, 
\end{eqnarray}
by the definition of $\frak{D}R_B$. Together (\ref{H Delta}) and (\ref{Delta R at p}) imply that  
\begin{equation*}
    \norm{\hat{R}^\h_M -R_{M}^\h + (2\hess_{\omega_h} \circ g_B)}  < \frac{\epsilon}{4}, \; \; \text{provided $\delta$ is  small enough,}
\end{equation*}
which combined with \eqref{GW-HHHh--2} gives us that 
\begin{equation*}
    \norm{\tilde{R}^\h_M -R_{M}^\h + (2\hess_{\omega_h} \circ g_B)}  < \frac{\epsilon}{2},
\end{equation*}
provided $\delta$ is sufficiently small. 

The previous display together with our hypothesis that $- 2 \hess^M_{{\omega}_h}(\cdot,\cdot) \geq ( \lambda_{h,h} +1) g_B(\cdot, \cdot),$ gives us
\begin{equation}\label{Delta R horiz}
    g(\frak{D}\cal{R}_M\left( X\wedge Y\right) ,X\wedge Y) \geq \lambda
_{h,h},  
\end{equation}
as claimed in \eqref{just better than--XY}.
\end{proof}

To finish the proof of Theorem \ref{trying too hard}, we fix $p\in B$ and remind the reader that (\ref{warped M metric defn}) defines the  metric $\tilde{g}_M$ that eventually proves Theorem \ref{trying too hard} as follows:
\begin{equation}\label{pet and repeat}
   \tilde{g}_M := e^{2\tilde{\omega}_h}g_M|_{\cal H\oplus \cal H} + e^{2\tilde{\omega}_v}g_M|_{\cal V \oplus \cal V}.
\end{equation}
Thus $\tilde{g}_M - g_M$ is determined by the choice of functions $\tilde{\omega}_h$ and $\tilde{\omega}_v$, which in turn we will construct via Lemma \ref{existence lemma omega} and Corollary \ref{existence lemma omega vertical}. The functions $\tilde{\omega}_h$ and $\tilde{\omega}_v$  are each constrained by a choice of the four constants $\tau,\eta,\epsilon,C$ mentioned in Lemma \ref{existence lemma omega} and Corollary \ref{existence lemma omega vertical}. Thus to prove Theorem \ref{trying too hard} we show that there are two quadruples of constants 
$$
(\tau_{h},\eta_{h},\epsilon _{h},C_{h}) \;\; \text{and} \; \;  (\tau _{v},\eta _{v},\epsilon _{v},C_{v})
$$
so that when we use the corresponding functions $\tilde{\omega}_h$ and $\tilde{\omega}_v$ to define $\tilde{g}_M$ via (\ref{pet and repeat}),  the resulting metric satisfies the conclusion of Theorem \ref{trying too hard}. We  explain this more formally via the following two definitions. 
\begin{defn}
Let $\tilde{\epsilon} >0$. A quadruple $(\tau_{h},\eta_{h},\epsilon_{h},C_{h})$ is called $\boldsymbol{\tilde{\epsilon}}$\textbf{-Horizontal Admissible} if there exists a corresponding function $\omega _{h}:B \to \R$ constructed via Lemma \ref{existence lemma omega} so that $(B,\tilde{g}_{B})$ satisfies the conclusion of Proposition \ref{neg Ricci at p}, with $\tilde{\epsilon}$ playing the role of $\epsilon$.
\end{defn}

\begin{defn}
Given $\epsilon, \tilde{\epsilon}>0$, let $(\tau _{h},\eta _{h},\epsilon _{h},C_{h})$ be an $\tilde{\epsilon} $-Horizontal Admissible quadruple, and let $\omega _{h}:B \to \R$ be a corresponding function, constructed via Lemma \ref{existence lemma omega}.

A quadruple $(\tau _{v},\eta _{v},\epsilon _{v},C_{v})$ is called  $\boldsymbol{(\epsilon,\tau_{h},\eta _{h},\epsilon _{h},C_{h})}$\textbf{-Vertical Admissible} if there exists a corresponding function $\tilde{\omega}_{v}:B\to \mathbb{R}$ constructed via Corollary \ref{existence lemma omega vertical} so that $(M,\tilde{g}_{M})$,
defined via (\ref{warped M metric defn}), satisfies 
\begin{equation*}
\tric_{b}^{M} \geq \min \ric_{b}^{M} -\epsilon.
\end{equation*}
\end{defn}

In particular, Theorem \ref{trying too hard} follows from the following proposition.

\begin{prop}
Given $\epsilon>0,$ there are an $\eta _{h}^{0}, \tilde{\epsilon}_{0}>0
$ with the following property: For all $\eta _{h}\in \left( 0,\eta
_{h}^{0}\right) $ and $\tilde{\epsilon} \in (0, \tilde{\epsilon}_{0})$, there is an  $
\tilde{\epsilon}  $-Horizontal Admissible $(\tau _{h},$ $\eta _{h},\epsilon
_{h},C_{h})$ and an $(\epsilon,\tau _{h},\eta _{h},\epsilon
_{h},C_{h})$-Vertical Admissible quadruple $(\tau _{v},$ $\eta _{v},\epsilon
_{v},C_{v})$.
\end{prop}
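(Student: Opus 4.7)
The plan is to nest the parameter choices: first fix the horizontal quadruple by appealing to Proposition \ref{neg Ricci at p}, then tune the vertical quadruple so that Lemma \ref{verification cor} applies pointwise via Proposition \ref{only-two-large-curvatures-prop}. Working backward from the prescribed $\epsilon > 0$, I first invoke Lemma \ref{verification cor} with $\epsilon$ in its role to obtain a $\delta_1 > 0$, then invoke Proposition \ref{only-two-large-curvatures-prop} with $\delta_1$ as its $\epsilon$ to obtain $\delta_2 > 0$. Set $\tilde{\epsilon}_0 := \delta_2$ and $\eta_h^0 := \tfrac{1}{2}\min\{1,\inj_M\}$. For any $\tilde{\epsilon} \in (0,\tilde{\epsilon}_0)$ and $\eta_h \in (0,\eta_h^0)$, Proposition \ref{neg Ricci at p} furnishes $\epsilon_h^0 > 0$; I then pick $\epsilon_h \in (0, \min\{\epsilon_h^0, \delta_2\})$, $C_h$ satisfying \eqref{choice of C_h}, and $\tau_h \in \left(0, \tfrac{\epsilon_h\eta_h}{2C_h}\right)$, and construct $\omega_h$ via Lemma \ref{existence lemma omega}. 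The quadruple $(\tau_h, \eta_h, \epsilon_h, C_h)$ is then $\tilde{\epsilon}$-Horizontal Admissible.

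With $\omega_h$ in hand, the crux is choosing the vertical data so that Lemma \ref{verification cor}'s hypothesis (either \eqref{RW Sum condition} or \eqref{all small}) holds pointwise. Inside $B(\pi^{-1}(p),\tau_v)$ (taking $\tau_v \leq \tau_h$), Proposition \ref{only-two-large-curvatures-prop} accommodates $\lambda_{h,h} = -6C_h - 1$ (since $\hess_{\omega_h} \leq 3C_h$ globally) and $\lambda_{h,v} = |C_v| - 1$ (since $-\hess_{\tilde{\omega}_v}|_{\cal H \oplus \cal H} \geq |C_v|$ there). Hence \eqref{RW Sum condition} holds as soon as $|C_v| > 6(b-1)C_h + b$. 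I therefore choose $C_v$ negative with $|C_v|$ exceeding this bound, $\epsilon_v \in (0,\delta_2)$, $\eta_v \in (0,\eta_h)$ small enough to localize $\supp \tilde{\omega}_v$ inside $\pi^{-1}(B(p,\tau_h))$, and $\tau_v \in \left(0, \min\left\{\tau_h, \tfrac{\epsilon_v\eta_v}{2|C_v|}\right\}\right)$; Corollary \ref{existence lemma omega vertical} then produces $\tilde{\omega}_v$. On the complement of $\supp \tilde{\omega}_v \cup \pi^{-1}(\supp\omega_h)$ both $\lambda$'s vanish, so \eqref{all small} holds trivially.

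The main obstacle is the transitional annular region where $\hess_{\omega_h}$ may still be as large as $3C_h$ while $-\hess_{\tilde{\omega}_v}|_{\cal H \oplus \cal H}$ has decayed below $|C_v|$. To handle it, I would exploit that the estimates \eqref{delta R Y Z}--\eqref{just better than--XY} and thus Lemma \ref{verification cor} can be checked pointwise with local $\lambda$'s: by shrinking $\eta_v$ deep into $B(p,\tau_h)$, the transitional region either lies where $|C_v|$ still dominates $(b-1)(3C_h)$ and the sum condition persists, or lies outside $\supp\tilde{\omega}_v$ where the $C^1$-smallness of $\epsilon_h,\epsilon_v$ lands us in the \eqref{all small} regime. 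With Lemma \ref{verification cor} verified at every point of $M$, $\frak{D}\ric_k^M \geq -\epsilon$ pointwise, which gives $\tric_k^M \geq \min \ric_k^M - \epsilon$ and establishes the $(\epsilon, \tau_h, \eta_h, \epsilon_h, C_h)$-Vertical Admissibility of $(\tau_v, \eta_v, \epsilon_v, C_v)$, completing the proof.
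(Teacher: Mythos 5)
The nesting of your supports is backwards, and this leaves a genuine gap. You choose $\eta_v < \eta_h$ and $\tau_v \leq \tau_h$, which places $\supp(\tilde{\omega}_v)$ and the strong-Hessian ball $B(\pi^{-1}(p),\tau_v)$ deep inside $\pi^{-1}(\supp(\omega_h))$. This creates the annulus $\pi^{-1}(\supp(\omega_h)) \setminus B(\pi^{-1}(p),\tau_v)$ in which $\hess_{\omega_h}$ can still be as large as $3C_h$ (so $\lambda_{h,h}$ can be as negative as roughly $-6C_h$), while the lower bound $-\hess_{\tilde{\omega}_v}|_{\cal H \oplus \cal H} \geq |C_v|$ has already failed; outside $B(\pi^{-1}(p),\tau_v)$ the only information is $-\hess_{\tilde{\omega}_v}|_{\cal H\oplus\cal H} \geq -\epsilon_v$, so $\lambda_{h,v}$ is essentially zero or slightly negative. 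Neither \eqref{RW Sum condition} nor \eqref{all small} holds there. Your proposed patch --- that outside $\supp\tilde{\omega}_v$ ``the $C^1$-smallness of $\epsilon_h,\epsilon_v$ lands us in the \eqref{all small} regime'' --- conflates $\norm{\omega_h}_{C^1}<\epsilon_h$ with smallness of $\hess_{\omega_h}$; the whole point of Lemma \ref{existence lemma omega} is that these are decoupled ($\omega_h$ is $C^1$-tiny while its Hessian reaches $3C_h \gg 1$), so $\lambda_{h,h}$ is large and negative in that annulus and \eqref{all small} is unavailable. Shrinking $\eta_v$ only enlarges the uncovered region.

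The fix is to nest the other way: require $\supp(\omega_h) \subset B(p,\tau_v)$, i.e.\ impose $2\eta_h < \tau_v$ (the paper uses $3\eta_h < \tau_v$). Then wherever $\hess_{\omega_h}$ can be large you simultaneously have $-\hess_{\tilde{\omega}_v}|_{\cal H\oplus\cal H} \geq |C_v|$, so you may take $\lambda_{h,h} = -6C_h-1$ and $\lambda_{h,v} = |C_v|-1$ there and choose $|C_v|$ so large that $(b-1)\lambda_{h,h} + \lambda_{h,v} > 0$. Outside $B(p,\tau_v)$, $\omega_h \equiv 0$, so $\lambda_{h,h}$ is essentially zero; apply Lemma \ref{verification cor} pointwise with \eqref{RW Sum condition} where $\hess_{\tilde{\omega}_v}|_{\cal H\oplus\cal H} < 0$ and with \eqref{all small} where $0 \leq \hess_{\tilde{\omega}_v}|_{\cal H\oplus\cal H} \leq \epsilon_v$. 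This eliminates the transitional annulus entirely, whereas your nesting makes it unavoidable. The rest of your set-up (working backward through Lemma \ref{verification cor} and Proposition \ref{only-two-large-curvatures-prop} to pick $\delta$'s, invoking Proposition \ref{neg Ricci at p} for horizontal admissibility) is consistent with the paper and would go through once the nesting is corrected.
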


\begin{proof}
From Proposition \ref{neg Ricci at p} we know that there is an $\tilde{\epsilon}_{0}>0$ so that
for all $\tilde{\epsilon} \in (0,\tilde{\epsilon}_{0})$ there are $\tilde{\epsilon} $
-Horizontal Admissible quadruples, $(\tau _{h},$ $\eta _{h},\epsilon
_{h},C_{h}).$  In particular, the constant $C_h$ is positive, so by Inequality (\ref{Hessian inequal +}),  for all unit vectors  $Z \in TB$,
    \begin{equation} \label{Hessian inequal ++}
     - \epsilon_h \leq \hess_{\omega_h}(Z,Z) \leq 3C_h.
    \end{equation}
On the other hand, via (\ref{gen Hess omega v}) we have that the Hessian of $\tilde{\omega}_v$  satisfies
    \begin{equation} \label{Hessian inequal --}
   3C_v \leq \hess_{\tilde{\omega}_v}(Z,Z) \leq \epsilon_v
    \end{equation}
for all unit vectors $Z \in TM$. Additionally, we have from  (\ref{spec Hess omega v}) that  for all unit horizontal vectors  $Z \in TM|_{B(\pi^{-1}(p),\tau_v)},$
     \begin{equation} \label{Hessian inequal -b-}
   \hess_{\tilde{\omega}_v}(Z,Z) \leq C_v <0.
    \end{equation}
We set 
\begin{eqnarray}\label{nearby lambdas}
\lambda_{h,h} : = -6 C_h  -1 \; \;  \text{and} \; \;  \lambda_{h,v} :=  -C_v -1,
\end{eqnarray}
and we further require that 
\begin{eqnarray}
3\eta _{h} &<&\tau _{v} \; \; \; \text{and}  \label{supp omega h} \\
(b-1)\lambda_{h,h} + \lambda_{h,v} &>& 0. \label{C in equal}
\end{eqnarray}

In particular, (\ref{supp omega h}) implies that  the support of $\tilde{\omega}_h$ is contained in $B(\pi^{-1} (p) ,\tau_v)$. Thus, we have that (\ref{Hessian inequal -b-}) holds throughout the support of $\tilde{\omega}_h$. Combining this with \eqref{Hessian inequal ++} and \eqref{nearby lambdas}  we see that on $B(p,\tau_v)$ we have that 
\begin{eqnarray*}
    - 2 \hess^M_{\omega_h}(\cdot,\cdot) \geq (\lambda_{h,h} +1) g_B (\cdot,\cdot),
 \end{eqnarray*}   
 and on $B(\pi^{-1}(p),\tau_v)$  we have that 
    \begin{eqnarray*}
    \; \; -\hess^M_{\tilde{\omega}_v}(\cdot,\cdot)|_{\cal H\oplus \cal H} \geq (\lambda_{h,v} +1) g_M (\cdot,\cdot)|_{\cal H\oplus \cal H},
\end{eqnarray*}
as in the hypotheses of Proposition \ref{only-two-large-curvatures-prop}.

Therefore by Proposition \ref{only-two-large-curvatures-prop}, $\frak{D}\cal{R}_M$ satisfies \eqref{abstract condition for closeness}--\eqref{just better than--XV}   , provided $\epsilon_h$ and $\epsilon_v$ are small enough. This together with the constraint \eqref{C in equal} gives that on $B(\pi^{-1}(p),\tau_v)$, $\frak{D}\cal{R}_M$ satisfies the (\ref{RW Sum condition})--version of Lemma \ref{verification cor}. So  
\[
\tric_{b}^{M} \geq \min \ric_{b}^{M} -\epsilon \quad \text{on $B(\pi^{-1}(p),\tau_v)$.}
\]

On the other hand, on $M\setminus B(\pi^{-1} (p),\tau_v)$, we see that $\tilde{\omega}_h \equiv 0$, so we still have that the (\ref{RW Sum condition}) version of  Lemma \ref{verification cor} holds wherever $\hess_{\tilde{\omega}_v}<0$. 

Finally, notice that the upper bound in (\ref{Hessian inequal --}) gives us that the (\ref{all small}) version of Lemma \ref{verification cor} holds wherever $\hess_{\tilde{\omega}_v}\geq 0$. We therefore have 
\[\tric_{b}^{M} \geq \min \ric_{b}^{M} -\epsilon\]
throughout $M$, as claimed.

\end{proof}

{\color{red}
To prove Corollary \ref{grp action cor}, we let $G\times M \longrightarrow M$ be free and isometric with respect to $g_M$, and we set $B: = M/G$. 
Observe that the functions from \eqref{omega h omega v}, 
\begin{equation*}
    \tilde{\omega}_h  :=  \omega_h \circ\pi \quad\text{and}\quad \tilde{\omega}_v := \omega_v  \circ\pi,
\end{equation*}
that we used  define $\tilde{g}_M$ are both $G$--invariant. This together with the fact that the horizontal and vertical distributions of  $\pi$ are $G$-invariant implies that metric, 
\begin{equation*}
   \tilde{g}_M := e^{2\tilde{\omega}_h}g_M|_{\cal H\oplus \cal H} + e^{2\tilde{\omega}_v}g_M|_{\cal V \oplus \cal V},
\end{equation*}
is also $G$-invariant, as claimed in the conclusion of Corollary \ref{grp action cor}.}


\bibliographystyle{plain} 
\bibliography{bibliography} 

\end{document}